\documentclass{amsart}[11pt]
\usepackage[utf8]{inputenc}
\usepackage{amsmath, amssymb, amsthm}
\usepackage[left=3cm, right=3cm, top=2cm, bottom=2cm]{geometry}
\usepackage{enumitem}
\usepackage[numbers,sort]{natbib}
\usepackage{thmtools, thm-restate}
\usepackage{tikz}
\usepackage{subcaption}
\usepackage{algorithm}
\usepackage[noend]{algpseudocode}

\usepackage{hyperref, color}

\DeclareMathOperator{\E}{\mathbb{E}}

\DeclareMathOperator{\Hom}{Hom}

\DeclareMathOperator{\im}{im}

\newtheorem{theorem}{Theorem}
\newtheorem{question}[theorem]{Question}
\newtheorem{corollary}[theorem]{Corollary}
\newtheorem{lemma}[theorem]{Lemma}
\newtheorem{claim}[theorem]{Claim}

\theoremstyle{definition}
\newtheorem{definition}[theorem]{Definition}

\theoremstyle{remark}
\newtheorem{remark}[theorem]{Remark}

\numberwithin{theorem}{section}
\numberwithin{definition}{section}
\numberwithin{lemma}{section}
\numberwithin{corollary}{section}
\numberwithin{remark}{section}
\numberwithin{claim}{section}

\title{Threshold Graphs Maximize Homomorphism Densities}
\author{Grigoriy Blekherman and Shyamal Patel}
\date{}

\begin{document}

\maketitle

\begin{abstract}
Given a fixed graph $H$ and a constant $c \in [0,1]$, we can ask what graphs $G$ with edge density $c$ asymptotically maximize the homomorphism density of $H$ in $G$. For all $H$ for which this problem has been solved, the maximum is always asymptotically attained on one of two kinds of graphs: the quasi-star or the quasi-clique. We show that for any $H$ the maximizing $G$ is asymptotically a threshold graph, while the quasi-clique and the quasi-star are the simplest threshold graphs, having only two parts. This result gives us a unified framework to derive a number of results on graph homomorphism maximization, some of which were also found quite recently and independently using several different approaches.  We show that there exist graphs $H$ and densities $c$ such that the optimizing graph $G$ is neither the quasi-star nor the quasi-clique \cite{day2019conjecture}. We also show that for $c$ large enough all graphs $H$ maximize on the quasi-clique \cite{gerbner2018maximum}, and for any $c \in [0,1]$ the density of $K_{1,2}$ is always maximized on either the quasi-star or the quasi-clique \cite{ahlswede1978graphs}. Finally, we extend our results to uniform hypergraphs.
\end{abstract}

\section{Introduction}
In this paper, we asymptotically study the number of homomorphisms from a fixed graph $H$ to graphs $G$ with a fixed edge density. Specifically, we investigate the properties of graphs $G$ that maximize the \emph{homomorphism density} from $H$ to $G$:

    \begin{definition}[Homomorphism Density]
        Denote the homomorphism density of $H$ in $G$ by 
            \[t(H,G) = \frac{\hom(H,G)}{|G|^{|H|}},\]
        where $|G|$ denotes the number of vertices in $G$ and $\hom(H,G)$ denotes the number of homomorphisms from $H$ to $G$.
    \end{definition}

Formally, for a given graph $H$ and $c \in [0,1]$, we are interested in finding a sequence of graphs that attains the value of

    \[\mathcal{M}_H(c) = \limsup_{G \in \mathcal{C}_c} t(H,G), \]

where $\mathcal{C}_c = \{G: t(K_2, G) \leq c\}$ and the $\limsup$ over a class of graphs is defined as 
    \[\limsup_{G \in \mathcal{C}} f(G) = \lim_{n \rightarrow \infty} \sup \{f(G): G \in \mathcal{C}, |G| \geq n\}.\]

This quantity has been studied for a variety of graphs $H$. Two families of graphs that frequently maximize $t(H, \cdot)$ are the quasi-clique and quasi-star, where a quasi-clique is an induced clique with isolated vertices and a quasi-star is the complement of a quasi-clique (see Figure \ref{fig:test}). We remark that in some papers, the quasi-clique is defined as isolated vertices and an induced clique with one additional vertex that may only be connected to a subset of vertices in the clique \cite{nagy2017number, day2019conjecture}. This is important if one wants to determine the non-asymptotic maximizer of $t(H,G)$ for a graph $G$ with say $n$ vertices and $m$ edges; however, we can use a simpler definition as we only consider asymptotic results.

\begin{figure}

\centering
\begin{subfigure}{.5\textwidth}
  \centering
	\begin{tikzpicture}[scale=1, transform shape]
	    \node[circle, inner sep=0pt, minimum size=2mm, fill] (A) at (0,.75) {};
	    \node[circle, inner sep=0pt, minimum size=2mm, fill] (B) at (.5,.1) {};
	    \node[circle, inner sep=0pt, minimum size=2mm, fill] (C) at (-.5,.1) {};
	    \node[circle, inner sep=0pt, minimum size=2mm, fill] (D) at (-.5,-.75) {};
	    \node[circle, inner sep=0pt, minimum size=2mm, fill] (E) at (.5,-.75) {};
	    
	    \draw (A) -- (B);
	    \draw (A) -- (C);
	    \draw (A) -- (D);
	    \draw (A) -- (E);
	    \draw (B) -- (C);
	    \draw (B) -- (D);
	    \draw (B) -- (E);
	    \draw (C) -- (D);
	    \draw (C) -- (E);
	    \draw (D) -- (E);
	    \draw (0,0) ellipse [x radius = 1cm, y radius = 1.15cm];
	    
	    \draw (3,0) ellipse [x radius = 1cm, y radius = 1.15cm];
	     
	    \node[circle, inner sep=0pt, minimum size=2mm, fill] (X) at (3,.75) {};
	    \node[circle, inner sep=0pt, minimum size=2mm, fill] (Y) at (3,0) {};
	    \node[circle, inner sep=0pt, minimum size=2mm, fill] (Z) at (3,-.75) {};
	    
	\end{tikzpicture}

    \caption{Quasi-clique}
  
\end{subfigure}%
\begin{subfigure}{.5\textwidth}
  \centering
	  \begin{tikzpicture}[scale=1, transform shape]
	    \node[circle, inner sep=0pt, minimum size=2mm, fill] (A) at (-.25,.6) {};
	    \node[circle, inner sep=0pt, minimum size=2mm, fill] (B) at (.5,0) {};
	    \node[circle, inner sep=0pt, minimum size=2mm, fill] (C) at (-.25,-.6) {};
	    
	    \draw (A) -- (B);
	    \draw (A) -- (C);
	    \draw (B) -- (C);
	    \draw (0,0) ellipse [x radius = 1cm, y radius = 1.15cm];
	    
	    \draw (3,0) ellipse [x radius = 1cm, y radius = 1.15cm];
	     
	    \node[circle, inner sep=0pt, minimum size=2mm, fill] (X) at (3,.75) {};
	    \node[circle, inner sep=0pt, minimum size=2mm, fill] (Y) at (3,0) {};
	    \node[circle, inner sep=0pt, minimum size=2mm, fill] (Z) at (3,-.75) {};
	    
	    \draw (A) -- (X);
	    \draw (A) -- (Y);
	    \draw (A) -- (Z);
	    \draw (B) -- (X);
	    \draw (B) -- (Y);
	    \draw (B) -- (Z);
	    \draw (C) -- (X);
	    \draw (C) -- (Y);
	    \draw (C) -- (Z);
	    
	\end{tikzpicture}
    \caption{Quasi-star}
\end{subfigure}
\caption{An example of a quasi-star and quasi-clique}
\label{fig:test}
\end{figure}
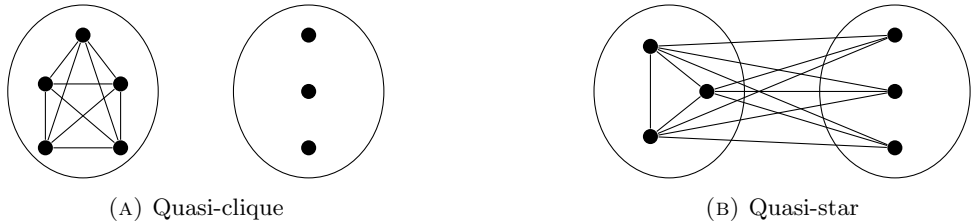

 A very general result of Alon implies that if $H$ has a spanning subgraph that is a disjoint union of cycles and isolated edges then $\mathcal{M}_H(c)$ is maximized on the quasi-clique for all $c$ \cite{alon1981number}. The study of the behavior of specific $H$ largely began with Ahlswede and Katona, who showed that for any $c \in [0,1]$ when $H$ is the $2$-star, a star with two edges, $\mathcal{M}_H(c)$ is always achieved on either the quasi-star or quasi-clique \cite{ahlswede1978graphs}. This result was later generalized to $k$-stars, showing that for any $c \in [0,1]$ the number of homomorphisms from the $k$-star is maximized when $G$ is the quasi-star or quasi-clique for small $k$ \cite{kenyon2017multipodal}, and shortly after for all $k \geq 2$ \cite{reiher2018maximum}. The question was also studied in the case where $H$ is the $4$-edge path, and again it was shown that the optimizing graph is always either the quasi-star and quasi-clique for all densities \cite{nagy2017number}.

A particularly important class of graphs for this problem are threshold graphs, which contain both the quasi-clique and quasi-star.

\begin{definition}(Threshold Graph)
\label{def:threshold-graph-nested}
A graph $T$ is threshold if for any two vertices $u,v \in V(T)$ we have that $N(u) \subseteq \overline{N(v)}$ or $N(v) \subseteq \overline{N(u)}$.
\end{definition}

We remark that there are other characterizations of threshold graphs \cite{mahadev1995threshold} which we will discuss in more detail in Section \ref{section:prelim}. Our main contribution is proving via a local move that the maximizer of $t(H,G)$ for any graph $H$ is always attained on a threshold graph.

    \begin{restatable}{theorem}{thresholdMaximizationTheorem}
        \label{thresholdMaximizationTheorem}
        For any graph $H$ and $c \in [0,1]$, we have that
            \[\mathcal{M}_H(c) = \limsup_{G \in \mathcal{C}_c} t(H,G) = \limsup_{G \in \mathcal{C}_c \cap \mathcal{T}} t(H,G), \]
        where $\mathcal{T}$ denotes the set of all threshold graphs. 
    \end{restatable}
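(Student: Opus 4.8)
The plan is to prove the non-trivial inequality $\mathcal{M}_H(c) \le \limsup_{G \in \mathcal{C}_c \cap \mathcal{T}} t(H,G)$ by showing that every finite graph $G \in \mathcal{C}_c$ can be transformed into a threshold graph $G'$ on the \emph{same} vertex set with $|E(G')| = |E(G)|$ and $\hom(H,G') \ge \hom(H,G)$. Since $|V(G')| = |V(G)|$, this gives $t(H,G') \ge t(H,G)$ and $t(K_2,G') = t(K_2,G) \le c$, i.e.\ $G' \in \mathcal{C}_c \cap \mathcal{T}$, which immediately relates the two $\limsup$s (the reverse inequality being trivial since $\mathcal{C}_c \cap \mathcal{T} \subseteq \mathcal{C}_c$). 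Here I use the characterization (see Section~\ref{section:prelim}) that $G$ is a threshold graph precisely when, for every pair $u \neq v$, the sets $N(u)\setminus\{v\}$ and $N(v)\setminus\{u\}$ are nested.

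I would build $G'$ by iterating one local move. If $G$ is not a threshold graph, pick $u \neq v$ with $N_W(u) := N(u)\cap W$ and $N_W(v) := N(v)\cap W$ incomparable, where $W = V(G)\setminus\{u,v\}$, and replace this pair of sets by $N_W(u)\cup N_W(v)$ and $N_W(u)\cap N_W(v)$, leaving the edge $uv$ and the induced subgraph $G[W]$ untouched. This ``uncrossing'' keeps $|E(G)|$ fixed (since $|X\cup Y| + |X\cap Y| = |X| + |Y|$) and makes $u,v$ comparable; moreover the degree of every vertex in $W$ is unchanged (each $w$ adjacent to exactly one of $u,v$ merely switches which one), while $\bigl(\deg u, \deg v\bigr)$ is replaced by a pair with the same sum but strictly larger maximum whenever $N_W(u), N_W(v)$ are incomparable. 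Hence the potential $\Phi(G) = \sum_{x \in V(G)} 2^{\deg(x)}$ strictly increases at every move by strict convexity of $t \mapsto 2^t$; since $\Phi$ is bounded on $n$-vertex graphs and $n$ is fixed throughout, the process terminates, necessarily at a graph with all neighbourhoods pairwise nested, i.e.\ a threshold graph.

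The crux, which I expect to be the main obstacle, is that the uncrossing move does not decrease $\hom(H,\cdot)$. The plan is to write $\hom(H,G) = \#\{(S_u, S_v, \psi)\}$, where $S_u = \phi^{-1}(u)$ and $S_v = \phi^{-1}(v)$ are disjoint $H$-independent sets (with $uv \in E(G)$ whenever $H$ has an edge between $S_u$ and $S_v$) and $\psi$ is the restriction of $\phi$ to $V(H)\setminus(S_u\cup S_v)$, a homomorphism into $G[W]$; validity of $\phi$ then reduces to $\psi(T_u) \subseteq N_W(u)$ and $\psi(T_v) \subseteq N_W(v)$, where $T_u, T_v$ are the $H$-neighbourhoods of $S_u, S_v$ lying outside $S_u \cup S_v$. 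Because the move only enlarges $N_W(u)$ and shrinks $N_W(v)$, the homomorphisms it destroys are exactly the $(S_u,S_v,\psi)$ with $\psi(T_v) \subseteq N_W(v)$ but $\psi(T_v) \not\subseteq N_W(u)$, and I would construct an injection from these into the homomorphisms the move creates, the first attempt being the swap $(S_u,S_v,\psi) \mapsto (S_v,S_u,\psi)$. This swap is valid precisely when $\psi(T_u) \subseteq N_W(u)\cap N_W(v)$; the hard case is when $\psi(T_u)$ also meets the private part $N_W(u)\setminus N_W(v)$, and handling it is where I expect the genuine difficulty to lie — resolving it should require either re-routing $\psi$ on $T_v$ into the common neighbourhood $N_W(u)\cap N_W(v)$, or replacing the one-shot uncrossing by a carefully ordered sequence of single-edge compressions, possibly together with an auxiliary correlation-type inequality. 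Once the move is shown to be $\hom$-non-decreasing, combining it with the termination argument of the previous paragraph finishes the proof.
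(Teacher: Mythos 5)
Your local move is the same one the paper uses (moving $N(v)\setminus\overline{N(u)}$ from $v$ to $u$ is exactly your uncrossing), and your termination argument via the convex potential is fine. But the step you flag as the crux --- that the move never decreases $\hom(H,\cdot)$ --- is genuinely false, not just hard, and no amount of re-routing or correlation inequalities will rescue an exact statement for general $H$. Concrete counterexample: take $H = G = C_4$ with vertices $1,2,3,4$ and edges $12,23,34,41$, and uncross the incomparable pair $u=1$, $v=2$. The result is the paw (triangle $134$ plus pendant $2$), and $\hom(C_4,C_4)=32$ while $\hom(C_4,\mathrm{paw})=28$. This is why the paper explicitly remarks that no threshold graph achieves $\mathcal{M}(C_4,4,4)$: an exact, edge- and vertex-preserving, $\hom$-non-decreasing reduction to threshold graphs cannot exist for all $H$, so any proof along your lines must fail for some $H$.

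What the paper does instead is accept the loss and show it is asymptotically negligible. Lemma~\ref{localMoveLemma} identifies precisely which homomorphisms can be destroyed: those mapping a \emph{forbidden path} (an induced-$P_4$/$C_4$ pattern $wxyz$ with $wy,xz\notin E(H)$) onto $u$, $v$, and a moved vertex. Lemma~\ref{badhomLemma} bounds these by $O(|S|\,n^{|H|-3})$ per move where $S$ is the set of moved vertices, and Lemma~\ref{transformationLemma} shows one can reach a threshold graph with total movement $O(|E|)=O(n^2)$, so the total loss is $O(n^{|H|-1})=o(n^{|H|})$ and the density is preserved up to $o(1)$ --- which is all the theorem claims. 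Your exact version does hold precisely when $H$ has no forbidden paths, i.e.\ no induced $P_4$ or $C_4$; that is Corollary~\ref{exactMaxCorollary}. So the missing idea is not a cleverer injection but the shift from an exact monotonicity claim to a quantitative bound on the homomorphisms lost, summed over a move sequence of controlled total movement.
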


Such a result is of interest as threshold graphs have simpler limit objects than general graphs. Instead of considering the $\limsup$ over all graphs, we can work directly with graphons, which are graph limit objects, and find the graphon that maximizes the number of homomorphisms from $H$ \cite{lovasz2012large}. This approach is employed in the results of Nagy \cite{nagy2017number} and Reiher and Wagner \cite{reiher2018maximum}. Threshold graphs may be more convenient to work with as their limits are one-dimensional, as opposed to graphons which are two dimensional \cite{diaconis2008threshold}.

We then extend Theorem \ref{thresholdMaximizationTheorem} to sparse graphs and hypergraphs. To do so, we define 
	\[\mathcal{M}(H,n,m) = \max \{\hom(H,G): |V(G)| \leq n, |E(G)| \leq m\}, \]
since homomorphism densities are zero for sparse graphs. Note that the function $\mathcal{M}(H,n,m)$ is well understood up to constant factors (depending on $H$) by a result of Janson et al. \cite{janson2004upper} (cf. Theorem \ref{subgraphCountTheorem}). 

    \begin{restatable}{theorem}{sparseThresholdMaximizationTheorem}
        \label{sparseThresholdMaximizationTheorem}
            For any graph $H$ and function $m(n) = \omega(n^{3/2})$, we have that
                \[\mathcal{M}(H,n,m) \sim \max \{\hom(H,T): |V(T)| \leq n, |E(T)| \leq m, T \in \mathcal{T}\}, \]
            where $\mathcal{T}$ denotes the set of threshold graphs. Moreover if $H$ has no induced $C_4$ or $P_4$, then we have equality for any positive integers $n$ and $m$ i.e.
	            \[\mathcal{M}(H,n,m) = \max \{\hom(H,T): |V(T)| \leq n, |E(T)| \leq m, T \in \mathcal{T}\}. \]
	\end{restatable}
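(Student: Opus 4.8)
The plan is to reuse, and in places make quantitative, the compression move behind Theorem~\ref{thresholdMaximizationTheorem}. Recall that a graph is a threshold graph exactly when its vicinal preorder is total, i.e.\ for every pair of vertices $u,v$ one has $N(u)\setminus\{v\}\subseteq N(v)\setminus\{u\}$ or $N(v)\setminus\{u\}\subseteq N(u)\setminus\{v\}$. So it suffices to find a move that (a) preserves $|V|$ and $|E|$, (b) makes progress toward a total vicinal preorder, and (c) does not decrease $\hom(H,\cdot)$ (for the exact statement) or decreases it by a controlled amount (for the asymptotic statement). I would take the move that, given vertices $u,v$ with incomparable neighborhoods, replaces $N(u)$ and $N(v)$ (outside $\{u,v\}$) by $N(u)\cup N(v)$ and $N(u)\cap N(v)$, leaving the edge $uv$ untouched. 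One checks immediately that $|V|$ and $|E|$ are unchanged and that $\sum_w d(w)^2$ strictly increases, by exactly $2\,|N(u)\setminus N(v)|\cdot|N(v)\setminus N(u)|$; since $\sum_w d(w)^2\le n^3$ this bounds the number of moves, and when no move is available the vicinal preorder is total, so the graph is threshold.

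For the exact part (no induced $C_4$ or $P_4$ in $H$), the key lemma is that this move never decreases $\hom(H,\cdot)$. I would prove it by comparing homomorphisms destroyed and created: a homomorphism $\phi\colon H\to G$ is destroyed only if some vertex of $H$ is sent to $v$ while an incident $H$-edge is sent into the ``private'' part $N(v)\setminus(N(u)\cup\{u\})$, and dually for creation at $u$; I would exhibit an injection from destroyed to created homomorphisms, whose only obstruction is an induced $C_4$ or $P_4$ on the four vertices of $H$ carrying that edge and its images. Granting the lemma, given any $G$ with $|V(G)|\le n$ and $|E(G)|\le m$, iterating the move yields a threshold graph $T$ with $|V(T)|\le n$, $|E(T)|\le m$, and $\hom(H,T)\ge\hom(H,G)$; taking $G$ optimal, and using that threshold graphs are a subclass of all graphs for the reverse inequality, gives the claimed equality for every $n$ and $m$.

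For the asymptotic part (arbitrary $H$, $m=\omega(n^{3/2})$), the move can genuinely decrease $\hom(H,\cdot)$ (already for $H=C_4$), so I would prove a quantitative version of the lemma: a single compression of $u,v$ decreases $\hom(H,\cdot)$ by at most the number of homomorphisms $H\to G$ that send a vertex to $v$ and an incident edge into the private part of $v$, and I would bound this by $O_H(1)$ times a product of a homomorphism count of a proper subgraph of $H$ with low powers of the private-part size. Summing over the at most $O(n^3)$ compressions needed to reach a threshold graph $T^*$ from the optimal $G^*$, and comparing against the lower bound $\mathcal{M}(H,n,m)\ge\hom(H,Q)$ for $Q$ the quasi-clique with $m$ edges (a fixed positive power of $m$), the hypothesis $m=\omega(n^{3/2})$ is exactly what forces the cumulative loss to be $o(\mathcal{M}(H,n,m))$; hence $\hom(H,T^*)=(1-o(1))\mathcal{M}(H,n,m)$ and the claimed equivalence follows. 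An alternative route is to prove $\mathcal{M}(H,n,m)=(1+o(1))\,n^{|V(H)|}\,\mathcal{M}_H(2m/n^2)$ by a rounding argument and then quote Theorem~\ref{thresholdMaximizationTheorem}, with $m=\omega(n^{3/2})$ guaranteeing that the optimal threshold graphon at density $2m/n^2$ is realizable on $n$ vertices with negligible rounding error.

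The main obstacle is the quantitative loss bound: the naive estimate on the number of destroyed homomorphisms is $O_H(n^{|V(H)|})$, of the same order as $\hom(H,G)$ itself and useless once summed over $\Omega(n^2)$ or more compressions. The real content is to show the loss is controlled by strictly smaller subgraph counts and to organize the compression sequence (always compressing toward a fixed target order, and amortizing the loss against the forced increase of $\sum_w d(w)^2$) so that the move count and per-move loss multiply to $o(\mathcal{M}(H,n,m))$ precisely in the range $m=\omega(n^{3/2})$. The other place needing care is verifying that induced $C_4$ and $P_4$ are the only local obstructions to the injection in the exact case.
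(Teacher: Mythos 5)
Your overall strategy is the paper's: the same shift move (send $N(v)\setminus\overline{N(u)}$ to $u$, equivalently replace the two neighborhoods by their union and intersection), the same characterization of the lost homomorphisms via induced $P_4$/$C_4$ obstructions, and the exact part for $P_4$- and $C_4$-free $H$ goes through exactly as you say (this is Corollary~\ref{exactMaxCorollary}). But the quantitative core of the sparse case, which you correctly flag as ``the main obstacle,'' is left unresolved, and the two ingredients you sketch for it do not suffice. First, the bookkeeping: what is needed is not a bound on the \emph{number} of moves but on the \emph{total movement} $\sum_i |S_i|$, where $S_i=N(v_i)\setminus\overline{N(u_i)}$ is the private part moved at step $i$. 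The paper (Lemma~\ref{transformationLemma}) orders the moves carefully (always compressing into a fixed max-degree vertex, then recursing) so that the total movement is at most $|E|=m$. Your potential function $\sum_w d(w)^2$ increases by $2|N(u)\setminus N(v)|\cdot|N(v)\setminus N(u)|$ per move and is bounded by $2mn$, which only yields $\sum_i |S_i|\cdot|S_i'|\le mn$, hence $\sum_i|S_i|\le mn$ --- off by a factor of $n$, which destroys the $\omega(n^{3/2})$ threshold.

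Second, the per-move loss. Your formulation ``a vertex sent to $v$ with an incident edge into the private part'' pins only two vertices of $H$, giving a loss per unit of movement of order $\mathcal{M}(H-2\text{ vertices},n,m)$; even with total movement $m$ this yields a relative loss of $m\cdot(n/m)^2=n^2/m$, which is $o(1)$ only for $m=\omega(n^2)$, i.e.\ never. The paper instead uses that a lost homomorphism must map a full forbidden path onto $u$, $v$, \emph{and} a vertex of $S$ --- three pinned vertices --- and then invokes the Janson et al.\ bound $\mathcal{M}(H,n,m)=\Theta\bigl(m^{|H|-\alpha^*(H)}n^{2\alpha^*(H)-|H|}\bigr)$ (via Lemmas~\ref{subgraphHoms} and~\ref{sparsebadHomLemma}) to show $\mathcal{M}(H',n,m)=O\bigl(\mathcal{M}(H,n,m)(n/m)^3\bigr)$ for the three-vertex-deleted subgraph $H'$. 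Multiplying $(n/m)^3$ by total movement $m$ gives relative loss $n^3/m^2$, which is $o(1)$ exactly when $m=\omega(n^{3/2})$. You need both the factor-$3$ pinning and some substitute for the Janson estimate (the paper notes one can avoid quoting it by adding $\Omega(m/n)$ dominating vertices to a near-optimal graph for $H'$). Finally, your fallback route via $\mathcal{M}(H,n,m)\approx n^{|H|}\mathcal{M}_H(2m/n^2)$ does not cover the regime $m=o(n^2)$: there $c=2m/n^2\to 0$ and Theorem~\ref{thresholdMaximizationTheorem} is a statement for fixed $c$, so it gives no uniform control.
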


We say that $f(n) \sim g(n)$ if $\lim_{n \rightarrow \infty} f(n)/g(n) = 1$. 

We can also prove a similar result for suitably defined threshold hypergraphs.

\begin{definition}[Threshold Hypergraph]
\label{def:threshold-hypergraph}
Let $G$ be a $k$-uniform hypergraph with $k \geq 2$. We then say that $G$ is a threshold hypergraph if for any two vertices $x,y$ we either have that $x \ll_G y$ or $y \ll_G x$, where we say $x \ll_G y$ if for any $e \in E(G)$ with $x \in e$ and $y \not \in e$ we have that $(e \setminus \{x\}) \cup y \in E(G)$. We omit the subscript when the underlying hypergraph $G$ is clear from context.
\end{definition}

We remark that there are several non-equivalent definitions of threshold hypergraphs \cite{reiterman1985threshold}, but we'll use Definition \ref{def:threshold-hypergraph} throughout this paper.

    \begin{restatable}{theorem}{hypergraphThresholdMaximizationTheorem}
    \label{hypergraphThresholdMaximizationTheorem}
        Let $H$ be a fixed $k$-uniform hypergraph and $c \in [0,1]$ then we have that 
         \[\mathcal{M}_H(c) = \limsup_{G \in \mathcal{C}_c} t(H,G) = \limsup_{G \in \mathcal{C}_c \cap \mathcal{T}} t(H,G). \]
            \end{restatable}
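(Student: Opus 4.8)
The plan is to adapt the local-move argument behind Theorem~\ref{thresholdMaximizationTheorem}, replacing ``neighborhoods'' by ``links''. Here $H$ is the fixed $k$-uniform hypergraph of the statement (all of whose edges have $k$ distinct vertices), and $\mathcal{T}$ denotes the class of $k$-uniform hypergraphs with nested neighborhoods: for all vertices $u\neq v$, writing $L_u=\{e\in\binom{V(G)\setminus\{u\}}{k-1}:e\cup\{u\}\in E(G)\}$ and $L_u^{-v}=\{e\in L_u:v\notin e\}$ (and $L_v^{-u}$ symmetrically), one of $L_u^{-v}\subseteq L_v^{-u}$, $L_v^{-u}\subseteq L_u^{-v}$ holds; this is the weak notion of threshold hypergraph from \cite{reiterman1985threshold}. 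Since $\limsup_{G\in\mathcal{C}_c\cap\mathcal{T}}t(H,G)\leq\mathcal{M}_H(c)$ is immediate, it suffices to turn an arbitrary $G\in\mathcal{C}_c$ into a $G'\in\mathcal{C}_c\cap\mathcal{T}$ with $|V(G')|=|V(G)|$ and $t(H,G')\geq t(H,G)$.

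For the move, fix $G$ and vertices $u,v$ with non-nested links, put $W=V(G)\setminus\{u,v\}$ and regard $P=L_u^{-v}$, $Q=L_v^{-u}$ as subsets of the ground set $X:=\binom{W}{k-1}$. Delete every hyperedge of $G$ containing exactly one of $u,v$ and add back $\{u\}\cup e$ for $e\in P'$ and $\{v\}\cup e$ for $e\in Q'$, where $(P',Q')$ is a pair of subsets of $X$ with $|P'|+|Q'|=|P|+|Q|$; crucially, every hyperedge containing \emph{both} $u$ and $v$, and every hyperedge containing \emph{neither}, is left untouched. The resulting $k$-uniform $G'$ has the same numbers of vertices and hyperedges as $G$, so $G'\in\mathcal{C}_c$; the goal is to choose $(P',Q')$ nested with $t(H,G')\geq t(H,G)$.

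To see the move does not decrease $\hom(H,\cdot)$, fix $\phi\colon V(H)\to V(G)$ and record $A=\phi^{-1}(u)$, $B=\phi^{-1}(v)$, $\psi=\phi|_{V(H)\setminus(A\cup B)}$; note $\psi$ takes values in $W$. A hypergraph homomorphism is injective on each edge, so every $f\in E(H)$ meets $A$ and meets $B$ in at most one vertex apiece, splitting $E(H)$ into edges (i) avoiding $A\cup B$, (ii) meeting both, (iii) meeting $A$ in one vertex and avoiding $B$, and (iv) the mirror of (iii). For fixed $(A,B,\psi)$: the constraints coming from (i) and (ii) involve only hyperedges of $G$ avoiding $\{u,v\}$ or containing both, i.e.\ the frozen part, so they contribute an indicator $\iota(A,B,\psi)$ independent of $(P',Q')$; those from (iii) assert that the family $\mathcal{S}_u(A,B,\psi):=\{\psi(f\setminus\{a\}):f\cap A=\{a\}\}\subseteq X$ is contained in $P$; those from (iv) that a family $\mathcal{S}_v(A,B,\psi)\subseteq X$ is contained in $Q$; and $\mathcal{S}_u,\mathcal{S}_v$ depend only on $H,A,B,\psi$. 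Hence
\[
\hom(H,G)=\sum_{(A,B,\psi)}\iota(A,B,\psi)\,\mathbf{1}\{\mathcal{S}_u(A,B,\psi)\subseteq P\}\,\mathbf{1}\{\mathcal{S}_v(A,B,\psi)\subseteq Q\},
\]
and passing to $G'$ only replaces $(P,Q)$ by $(P',Q')$ in the last two indicators. This is exactly the nonnegative-coefficient expression treated in the proof of Theorem~\ref{thresholdMaximizationTheorem}, now with ``ground set'' $X=\binom{W}{k-1}$ in place of $V(G)\setminus\{u,v\}$; the argument there that such an expression is maximized, among pairs of a fixed total size, at a nested pair applies verbatim, and a nested maximizer $(P',Q')$ gives $\hom(H,G')\geq\hom(H,G)$ and so $t(H,G')\geq t(H,G)$.

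Iterating the move over non-nested pairs yields a hypergraph in $\mathcal{T}$ with the same vertex count and no smaller $t(H,\cdot)$, which gives the theorem. The two points requiring care — and where this departs from the graph case — are, first, that the clean decomposition above forces us to freeze \emph{all} hyperedges through $\{u,v\}$ rather than just the edge $uv$, so one must check, via edge-injectivity of homomorphisms, that $\iota$, $\mathcal{S}_u$, $\mathcal{S}_v$ really are independent of the rearranged edges; and second, that a single move can un-nest other pairs, so termination needs a monovariant, and one should verify that the one behind Theorem~\ref{thresholdMaximizationTheorem} (or an analogue, such as a lexicographic invariant on the link profiles) still works. This is the main obstacle. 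Everything else transfers, but the output is only guaranteed to have nested neighborhoods, which for $k\geq 3$ is strictly weaker than being a threshold hypergraph in the sense of \cite{reiterman1985threshold} — precisely the reason the theorem is stated with that caveat.
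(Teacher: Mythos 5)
There is a genuine gap at the heart of your argument: the claim that, for the nonnegative sum $\sum_{(A,B,\psi)}\iota(A,B,\psi)\,\mathbf{1}\{\mathcal{S}_u\subseteq P\}\,\mathbf{1}\{\mathcal{S}_v\subseteq Q\}$, there always exists a \emph{nested} pair $(P',Q')$ with $|P'|+|Q'|=|P|+|Q|$ that does not decrease the sum. You attribute this to the proof of Theorem \ref{thresholdMaximizationTheorem} "verbatim," but no such statement is proved there: that proof does not show the local move is exactly non-decreasing. It constructs an injection showing the move preserves all homomorphisms \emph{except} those mapping a forbidden path onto $u$, $v$, and a moved vertex (Lemma \ref{localMoveLemma}), and exactness holds only when $H$ has no induced $P_4$ or $C_4$ (Corollary \ref{exactMaxCorollary}). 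Worse, the exact claim you need is false. Specialized to $k=2$ your move preserves $n$ and $m$, so iterating it would produce, for every $G$, a threshold graph with the same vertex and edge counts and at least as many homomorphisms from $H$; the Remark following Theorem \ref{thresholdMaximizationTheorem} notes that no threshold graph achieves $\mathcal{M}(C_4,4,4)$, whereas $C_4$ itself does, so some move in any such sequence must strictly lose homomorphisms. Your second flagged issue (termination) is also real and left unresolved: repeatedly "fixing" non-nested pairs can un-nest others, and you supply no monovariant.

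The paper's proof avoids both problems by being deliberately lossy and by controlling the \emph{number} of moves rather than making each move exact. Its hypergraph local move shifts all of $v$'s edges avoiding $u$ over to $u$, and Lemma \ref{localMoveHypergraphLemma} shows only homomorphisms whose image contains both $u$ and $v$ can be lost --- at most $O(n^{|H|-2})$ per move. The work then goes into Lemma \ref{lem:hypergraphTransformation}: after deleting the edges through every $(k-1)$-set of degree below $\sqrt{n}$ (at most $n^{k-1/2}=o(n^k)$ edges), a probabilistic dominating-set bound on the incidence bipartite graph yields a set of $O(\sqrt{n}\log n)$ vertices whose links, once merged by local moves into a single vertex, dominate everything; induction then gives a threshold hypergraph after $O(n^{3/2}\log n)=o(n^2)$ moves in total. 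Combining, the total loss is $o(n^{2})\cdot O(n^{|H|-2})+o(n^{k})\cdot O(n^{|H|-k})=o(n^{|H|})$, i.e.\ $t(H,T)\geq t(H,G)-o(1)$, which is all the asymptotic statement requires. If you want to salvage your approach, you would need to replace the exact-compression claim with a quantitative loss bound of this kind and supply an explicit bound on the number of moves; as written, both the key lemma and the termination argument are missing.
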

In a slight abuse of notation, we take $\mathcal{T}$ above to be the set of threshold hypergraphs and $\mathcal{C}_c$ to be the set of hypergraphs of edge density at most $c$.

The remainder of the paper shows a variety of applications of these ideas. We first give a simple proof in Theorem \ref{cliqueMaximizationTheorem} of a recent result by Gerbner et al. \cite{gerbner2018maximum} that every graph is maximized on the quasi-clique for $c$ sufficiently close to $1$. More precisely, we show that if $\mathcal{M}_{K_{1,|H|-1}}(c)$ is asymptotically maximized on the quasi-clique then so is $\mathcal{M}_{H}(c)$. 

We then contrast this with an example in Theorem \ref{threePartsTheorem} of a graph $H$ which for some $c\in [0,1]$ is optimized on neither the quasi-star nor the quasi-clique, disproving a conjecture of Nagy \cite{nagy2017number}. To do so, we explicitly find a graph $G$ that has more homomorphisms from $H$ than the quasi-star and quasi-clique when $c$ is small. This result was proved independently by Day and Sarkar \cite{day2019conjecture}, who used a similar argument. While the arguments are fundamentally the same, the class of graphs $\mathcal{G}$ that we use with more homomorphisms from $H$ than the quasi-star or quasi-clique is the next simplest threshold graph, fitting well into our results.

Finally, we reprove a result that the two-star is maximized on either the quasi-star or quasi-clique.
    \begin{restatable}{theorem}{cherryMaximizationTheorem}
    \label{cherryMaximizationTheorem}
        For any $c \in [0,1]$, we have that $\mathcal{M}_{K_{1,2}}(c)$ is asymptotically attained on the quasi-star or quasi-clique.
    \end{restatable}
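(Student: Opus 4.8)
The plan is to feed Theorem~\ref{thresholdMaximizationTheorem} into an explicit optimization over threshold graphons. By Theorem~\ref{thresholdMaximizationTheorem}, $\mathcal M_{K_{1,2}}(c)=\limsup_{G\in\mathcal C_c\cap\mathcal T}t(K_{1,2},G)$, and since the threshold graphons form a closed subset of graphon space and $t(H,\cdot)$ is continuous, this equals $\sup\{t(K_{1,2},W):W\text{ a threshold graphon},\ t(K_2,W)\le c\}$, a supremum that is attained. As adding edges (enlarging the graphon) only increases both $t(K_2,\cdot)$ and $t(K_{1,2},\cdot)$ and can be done within $\mathcal T$, we may assume $t(K_2,W)=c$. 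By the nested-neighborhood structure of threshold graphs, such a $W$ is (up to weak isomorphism) the indicator of a set $R\subseteq[0,1]^2$ that is symmetric and downward closed in the product order; equivalently $R=\{(x,y):y<f(x)\}$ for a nonincreasing degree function $f\colon[0,1]\to[0,1]$ whose subgraph is symmetric, i.e.\ $f$ coincides with its generalized inverse. Then $t(K_2,W)=\int_0^1 f=\lambda(R)=c$ and $t(K_{1,2},W)=\int_0^1 f(x)^2\,dx$ (the limit of $n^{-3}\sum_v d(v)^2$), and by the symmetry of $R$ one has $\int_R x\,d\lambda=\int_R y\,d\lambda=\tfrac12\int_0^1 f^2$, so that $t(K_{1,2},W)=\int_R(x+y)\,d\lambda$.

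Next I would split $R$ at the diagonal fixed point $t^\ast$ of $f$ (so $f(t^\ast)=t^\ast$). Because $R$ is a symmetric down-set, $R=[0,t^\ast]^2\sqcup B\sqcup B^{\mathsf T}$ where $B=R\cap([0,t^\ast]\times[t^\ast,1])$ is the region under a nonincreasing $b\colon[t^\ast,1]\to[0,t^\ast]$. This yields the two identities $c=(t^\ast)^2+2\int_{t^\ast}^1 b$ and $t(K_{1,2},W)=(t^\ast)^3+2\int_{t^\ast}^1\big(\tfrac12 b(y)^2+y\,b(y)\big)\,dy$. For a \emph{fixed} $t^\ast$, the admissible functions $b$ form a convex set (nonincreasing, $[0,t^\ast]$-valued, with prescribed integral $\tfrac12(c-(t^\ast)^2)$), while $b\mapsto\int(\tfrac12 b^2+yb)$ is a convex functional; hence by Bauer's maximum principle — or by a direct polarization move that pushes any $b$ taking a value in $(0,t^\ast)$ on a positive-measure set toward $\{0,t^\ast\}$ while preserving monotonicity and the integral, and never decreasing the objective — the maximum is attained at the unique extreme point $b=t^\ast\mathbf 1_{[t^\ast,\tau)}$, with $\tau\in[t^\ast,1]$ forced by the integral constraint. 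This $b$ is precisely the degree profile of a threshold graph with at most three blocks, degenerating to the quasi-clique when $\tau=t^\ast$ and to the quasi-star when $\tau=1$.

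Substituting and eliminating $\tau$, the problem collapses to maximizing the one-variable function $\Phi(t)=-\tfrac14 t^3+ct+\tfrac{c^2}{4t}$ over the feasible interval $t\in[\,1-\sqrt{1-c},\ \sqrt c\,]$ (one checks directly that every admissible $t^\ast$ lies in this interval, whose two endpoints give exactly the quasi-star and the quasi-clique). Here $\Phi''(t)=\tfrac{1}{2t^3}(c^2-3t^4)$, so $\Phi$ is convex on $[\,1-\sqrt{1-c},\,t_0]$ and concave on $[t_0,\sqrt c]$ with $t_0=c^{1/2}/3^{1/4}$, and $\Phi'(\sqrt c)=0$ while $\Phi'>0$ on $[t_0,\sqrt c)$; hence $\Phi$ is increasing on $[t_0,\sqrt c]$ and its maximum over the feasible interval is attained at one of the endpoints. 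Thus $\mathcal M_{K_{1,2}}(c)=\max\big(\Phi(1-\sqrt{1-c}),\,\Phi(\sqrt c)\big)$, with $\Phi(\sqrt c)=c^{3/2}$ the quasi-clique value and $\Phi(1-\sqrt{1-c})$ the quasi-star value, which is the claim.

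I expect the real work to be in the reduction of the first two paragraphs: pinning down rigorously the dictionary between threshold graphons, symmetric down-sets, and self-inverse degree functions (with the attendant null-set bookkeeping), verifying that the optimum may be taken within $\mathcal T$ and with the density constraint tight, and justifying the passage to an extreme point of the admissible $b$'s — either citing Bauer's maximum principle on the weakly compact convex set of nonincreasing functions, or spelling out the polarization local move. Once those are in place, the identification $t(K_{1,2},W)=\int_R(x+y)\,d\lambda$ and the final single-variable calculus are short.
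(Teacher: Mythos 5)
Your overall route is the paper's: reduce to threshold graphs via Theorem~\ref{thresholdMaximizationTheorem}, then solve an explicit optimization, and your final one-variable function $\Phi(t)=-\tfrac14 t^3+ct+\tfrac{c^2}{4t}$ on $[1-\sqrt{1-c},\sqrt c]$ is exactly the function the paper analyzes (its $f(\beta)$ with the tail parameter $k=0$), with the same endpoint conclusion. The setup in your first paragraph (symmetric down-sets, $t(K_{1,2},W)=\int f^2$, the split at the diagonal fixed point) is also fine. The problem is the middle step, the reduction to three blocks, and it is a genuine gap. The set $K=\{b:[t^*,1]\to[0,t^*]\ \text{nonincreasing},\ \int b=s\}$ does \emph{not} have a unique extreme point: a compact convex set with one extreme point is a singleton by Krein--Milman, and $K$ plainly is not. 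Concretely, the two-step profile $b=t^*\mathbf{1}_{[t^*,p)}+v\,\mathbf{1}_{[p,q)}$ with $0<v<t^*$ and $q<1$ \emph{is} an extreme point of $K$: if $b\pm h\in K$, then $h=0$ on $\{b=t^*\}$ and on $\{b=0\}$ (from the bounds $0\le b\pm h\le t^*$), $h$ is constant on $[p,q)$ (since $b\pm h$ both nonincreasing forces $h$ both nonincreasing and nondecreasing there), and $\int h=0$ then kills the constant. Such extreme points correspond to threshold graphs with four or five parts, so Bauer's principle only lands you on a larger family than the three-block graphs, and the reduction to $\Phi$ does not follow.

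The fallback polarization move does not obviously rescue this: replacing $v\,\mathbf{1}_{[p,q)}$ by $t^*\mathbf{1}_{[p,p')}$ with the same integral increases $\tfrac12\int b^2$ but \emph{decreases} $\int y\,b(y)\,dy$ (the mass moves to smaller $y$), so whether the objective goes up requires an actual computation over the remaining free parameter --- and that computation is essentially the paper's: its local objective $f(\beta)$ is convex--concave rather than convex, which is a structural warning that no soft convexity argument will eliminate the intermediate value. The paper circumvents all of this by taking a minimal-parts optimizer, freezing everything outside three consecutive blocks in a parameter $k$, and running the second-derivative analysis twice (for sequences beginning with $0$ and with $1$). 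To repair your write-up you would either need to classify the extreme points of $K$ correctly and then beat the two-step ones by a two-parameter optimization, or adopt the paper's local three-block argument; the first and last paragraphs of your proposal can stay as they are.
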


\section{Preliminaries}
\label{section:prelim}
\subsection{Notation}
We typically use $H$ as the fixed graph, $G$ as the target graph, $c$ as the edge density of $G$, and let $n$ and $m$ denote the number of vertices and edges in $G$. We'll let $\Hom(H,G)$ denote the set of homomorphisms from $H$ to $G$ and $\hom(H,G)$ be the cardinality of $\Hom(H,G)$. Moreover, we will be consistent with the notation described in the introduction.

We denote the neighborhood of a vertex $u$ in $G$ by $N_G(u)$, and the closed neighborhood of a vertex $u$ by $\overline{N}_G(u) := N_G(u) \cup \{u\}$. We frequently drop the subscript $G$ when the graph is clear from context. 

Often times when using big-Oh notation, there will be implicit constants depending on the graph $H$ or other parameters $k$. In this case, we denote this using a subscript e.g. $O_H(f(n))$ or $O_k(f(n))$.

\subsection{Threshold Graphs}
We begin by recalling some basic facts about threshold graphs. There are a number of equivalent definitions of threshold graphs and for a thorough treatment of the subject we refer the reader to \cite{mahadev1995threshold}. A key second characterization of threshold graphs will be

\begin{definition}[Threshold Graph]
A graph is a threshold if it can be built, starting from a single vertex graph, by repeatedly adding dominating or isolated vertices.
\end{definition}

In light of the above definition, we put the set of $n$ vertex threshold graphs in one-to-one correspondence with binary sequences of length $n - 1$. In such a sequence the $i$th element is a $1$ if the $i$th vertex that we added is a dominating vertex and a $0$ if the $i$th vertex added is an isolated vertex.

Moreover, when using this representation we will often refer to the number of parts of a threshold graph. This refers to the number of blocks in the corresponding binary string of the given threshold graph. For instance, $1011100$ corresponds to a threshold graph with $4$ parts. Similarly, we note that the quasi-clique and quasi-star are threshold graphs with $2$ parts.

Finally, while we do not use this definition directly, we state it as it shows that threshold graphs satisfy the criteria of Corollary \ref{exactMaxCorollary}, below. 

\begin{definition}[Threshold Graph]
A graph is threshold if and only if it contains no induced copies of a $4$ vertex path, a $4$ vertex cycle, or the complement of a $4$ vertex cycle.
\end{definition}

\subsection{Basic Properties of $\mathcal{M}_H(c)$}
There are a number of basic properties of $\mathcal{M}_H(c)$ that we rely on. For instance, the function is continuous. We will say a homomorphism $\varphi: V(H) \rightarrow V(G)$ uses an edge $uv \in E(G)$ if there exists an edge $xy \in H$ such that $u = \varphi(x)$ and $v = \varphi(y)$. Similarly, it uses a vertex $v \in V(G)$ if $v \in \im \varphi$.

\begin{lemma}
\label{homcontinuitylemma}
For any fixed graph $H$, $\mathcal{M}_H(c)$ is continuous.
\end{lemma}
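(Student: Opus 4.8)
The plan is to show that $\mathcal{M}_H(c)$ is both monotone in a useful sense and Lipschitz-like near any point, but really the cleanest route is to exhibit $\mathcal{M}_H$ as the supremum of a family of continuous functions which is also equi-controlled, so that it is continuous. Concretely, I would first observe the easy monotonicity-type fact: if $c' < c$ then any $G \in \mathcal{C}_{c'}$ is also in $\mathcal{C}_c$, hence $\mathcal{M}_H(c') \le \mathcal{M}_H(c)$, so $\mathcal{M}_H$ is nondecreasing. Thus it suffices to bound the jump from below and above at each $c$; for a nondecreasing function, continuity follows from showing $\mathcal{M}_H(c) \le \liminf_{c'\uparrow c}\mathcal{M}_H(c')$ (left-continuity) and $\mathcal{M}_H(c) \ge \limsup_{c'\downarrow c}\mathcal{M}_H(c')$ (right-continuity). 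Right-continuity: given $G$ with $t(K_2,G)\le c'$ for $c'$ slightly bigger than $c$, I want to perturb $G$ to a graph $G'$ with $t(K_2,G')\le c$ and $t(H,G')$ close to $t(H,G)$; deleting a small (but positive density) set of edges, or blowing up vertices and deleting edges there, drops the edge density below $c$ while changing $t(H,\cdot)$ by at most $O(\varepsilon)$ since $t(H,\cdot)$ is itself Lipschitz in the cut metric / in edge-edit distance with constant depending only on $|E(H)|$.

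The key quantitative input is the standard counting-lemma-type estimate: for graphs $G$ on the same vertex set, $|t(H,G) - t(H,G')| \le |E(H)| \cdot d_1(G,G')$, where $d_1$ is the normalized edit distance (fraction of pairs on which $G,G'$ disagree), and more generally $t(H,\cdot)$ extends continuously to graphons in the cut norm. Using this, to pass from edge density $\le c+\delta$ to edge density $\le c$ I would take $G$ of order $n$ achieving $t(H,G)$ within $\delta$ of $\mathcal{M}_H(c+\delta)$, and delete roughly $\delta n^2/2$ edges chosen to touch few copies of $H$ on average (a simple averaging argument shows some choice changes $t(H,G)$ by at most $O(|E(H)|\delta)$). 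That yields $\mathcal{M}_H(c) \ge \mathcal{M}_H(c+\delta) - O(|E(H)|\delta) - \delta$, giving right-continuity. For left-continuity one similarly adds a vanishing density of edges to a near-optimal graph for density slightly below $c$, or simply uses that an optimal-ish sequence for $c$ can be truncated in edge density by the same averaging move to live in $\mathcal{C}_{c-\delta}$ up to $O(\delta)$ loss, combined with the already-established monotonicity.

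The main obstacle I anticipate is purely bookkeeping: the $\limsup$ is over graphs of all sizes, so I must make sure the edge-deletion/edge-addition perturbation is uniform in $n$ and does not interact badly with the $|G|^{|H|}$ normalization — this is where phrasing everything through graphons (Lemma-style: $\mathcal{M}_H(c) = \max\{t(H,W): W \text{ graphon}, \int W \le c\}$, which is attained by weak-$*$ compactness and continuity of $t(H,\cdot)$) makes the argument cleanest, since then continuity of $\mathcal{M}_H$ reduces to: the feasible set $\{W : \int W \le c\}$ varies continuously (in Hausdorff sense under the cut metric) with $c$, and $t(H,\cdot)$ is uniformly continuous on the (compact) space of graphons. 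I would present the graphon version as the main line, and remark that the combinatorial edge-perturbation argument gives the same bound directly if one prefers to avoid graph limits. A secondary minor point to check is that $\mathcal{M}_H(0) = 0$ for $H$ with an edge and the formula behaves at the endpoint $c=1$, but these are immediate.
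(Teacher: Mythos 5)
Your combinatorial argument is essentially the paper's proof: the paper also notes monotonicity of $\mathcal{M}_H$ and then, since each edge of $G$ lies in at most $O(n^{|H|-2})$ homomorphic images of $H$, deletes $(c-c')n^2/2$ edges from a near-optimal $G$ of density $c$ to land in $\mathcal{C}_{c'}$ while losing only $O(|c-c'|)$ in $t(H,\cdot)$, and symmetrically for $c'>c$. Two small remarks. First, your averaging step ("choose edges touching few copies on average") is unnecessary: the union bound over deleted edges already gives the Lipschitz estimate for an \emph{arbitrary} set of $\delta n^2$ deleted edges, which is all the paper uses. Second, the graphon reformulation you propose as the "main line" is valid but is heavier machinery than needed, and you should be careful that identifying $\limsup_{G\in\mathcal{C}_c} t(H,G)$ with $\sup\{t(H,W):\int W\le c\}$ already requires approximating a graphon by graphs of density exactly at most $c$ --- which is most naturally done by the very same edge-deletion perturbation, so the elementary argument is doing the real work either way. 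With either presentation the proof is correct.
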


\begin{proof}
Let $\epsilon > 0$ and $c \in [0,1]$.  Note that there are at most $k n^{|H|-2}$ homomorphisms using a given edge, where $k$ is a constant only depending on $H$. Then it follows that there are at most $k \delta n^{H}$ homomorphisms using any set of at most $\delta n^2$ edges. We claim it suffices to take $\delta = \epsilon / k$.

To see this, let $c' < c$ such that $|c' - c| < \delta$. Now if $G$ is a graph with edge density $c$ such that $t(H,G) > \mathcal{M}_H(c) - \frac{\epsilon}{2}$, then we have that removing any subset of $(c - c')n^2/2$ edges gives us a graph $G'$ with
    \[t(H,G') \geq t(H,G) - \epsilon/2 \geq \mathcal{M}_H(c) - \epsilon.\]
Hence, it follows that $\mathcal{M}_H(c') \geq \mathcal{M}_H(c) - \epsilon$. Since $\mathcal{M}_H(c')$ is clearly non-decreasing, it follows that $|\mathcal{M}_H(c') - \mathcal{M}_H(c)| \leq \epsilon$.
\end{proof}

We now also note that while in the definition of $\mathcal{M}_H(c)$ we take a $\limsup$ we could have equivalently defined it as the supremum over all graphs in $\mathcal{C}_c$. This follows from the following useful lemma

\begin{lemma}
\label{homSeqLemma}
Let $G$ be a graph on $n$ vertices, then there exists a sequence of graphs $G = G_1, G_2, \hdots$ such that $|G_{i+1}| > |G_i|$ and $t(H,G) = t(H,G_{i})$ for any graph $H$. 
\end{lemma}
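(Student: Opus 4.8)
The plan is to exhibit the required sequence explicitly using balanced blow-ups. For an integer $r \geq 1$, let $G[r]$ denote the graph on vertex set $V(G) \times [r]$ in which $(u,j)$ and $(v,k)$ are adjacent exactly when $uv \in E(G)$; in particular, since $G$ is simple and loopless, there are no edges inside any ``blob'' $\{v\} \times [r]$. Taking $G_i := G[i]$ gives $|G_{i+1}| = (i+1)\,|G| > i\,|G| = |G_i|$, so it remains only to verify that $t(H, G[r]) = t(H,G)$ for every graph $H$ and every $r \geq 1$.

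For this I would count homomorphisms directly. A map $\phi : V(H) \to V(G) \times [r]$ can be written as a pair $\phi = (\psi, \chi)$ with $\psi : V(H) \to V(G)$ and $\chi : V(H) \to [r]$. If $xy \in E(H)$, then $\phi(x)$ and $\phi(y)$ must be adjacent in $G[r]$; this cannot happen when $\psi(x) = \psi(y)$ (no edges within a blob), and when $\psi(x) \neq \psi(y)$ it holds if and only if $\psi(x)\psi(y) \in E(G)$. Hence $\phi$ is a homomorphism $H \to G[r]$ precisely when $\psi$ is a homomorphism $H \to G$ and $\chi$ is an arbitrary function, so $\hom(H, G[r]) = \hom(H,G) \cdot r^{|H|}$. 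Dividing by $|G[r]|^{|H|} = (r\,|G|)^{|H|}$ gives $t(H, G[r]) = \hom(H,G)/|G|^{|H|} = t(H,G)$, as wanted.

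There is essentially no obstacle here; the only step that needs a moment's care is the claim that an edge of $H$ cannot be mapped entirely inside one blob, which is exactly where the looplessness of $G$ (and the convention that blowing up a simple graph leaves blobs edgeless) is used. One can alternatively observe that $G[r]$ and $G$ induce the same step graphon $W_G$ and that $t(H,\cdot)$ depends only on the graphon, but the direct count above is self-contained and avoids invoking graph limit theory. As a bonus, this argument makes precise the remark that $\mathcal{M}_H(c)$ may equivalently be defined as a supremum rather than a $\limsup$, since any witness $G$ can be replaced by arbitrarily large graphs $G[r]$ of the same edge density and the same value of $t(H,\cdot)$.
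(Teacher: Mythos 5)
Your proof is correct and is essentially the same as the paper's: the paper takes the $2$-fold blow-up $G^2$ (adjacency matrix with all four blocks equal to $A$, hence edgeless blobs) and iterates it, proving $\hom(H,G^2)=2^{|H|}\hom(H,G)$ by exactly the factorization $\phi=(\psi,\chi)$ you describe. Using the $r$-fold blow-up for every $r$ rather than iterated doubling is an immaterial variation.
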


\begin{proof}
Let $G$ be a graph with $n$ vertices, $v_1, ..., v_n$. If $A$ denotes the adjacency matrix of $G$, then consider $G^2$, which we define to be the graph with adjacency matrix
    \[\left[
        \begin{array}{c|c}
          A & A \\
          \hline
          A & A
        \end{array}
    \right].\]

Now we claim that $\hom(H,G^2) = 2^{|H|} \hom(H,G)$. To see this we note that we can construct $G^2$ by adding vertices $v_1', ..., v_n'$, where we have edges $v_i v_j$, $v_iv_j'$, $v_i'v_j$, and $v_i' v_j'$ if and only if $v_i v_j \in E(G)$. Let $\pi: V(G^2) \rightarrow V(G)$ be the map that sends $v_i'$ to $v_i$ and sends $v_i$ to itself. Now suppose that $\varphi \in \hom(H,G^2)$. Consider $\pi \circ \varphi: H \rightarrow V(G)$ and note that by the definition of $G^2$ we have that $\pi \circ \varphi \in \hom(H,G)$. From this, we see that there are at most $2^{|H|} |\hom(H,G)|$ homomorphisms from $H$ to $G^2$. But conversely, we can see that given a homomorphism $\varphi_1 \in \hom(H,G)$, if $\varphi_2: V(H) \rightarrow V(G^2)$ is a function such that $\pi \circ \varphi_2 = \varphi_1$, then $\varphi_2$ is a homomorphism. Hence, we have that $\hom(H, G^2) = 2^{|H|} \hom(H,G)$ and thus that $t(H,G) = t(H,G^2)$. Taking $G_{i+1} = (G_i)^2$ then gives the desired result.
\end{proof}

Note that the sequence of graphs does not depend on $H$. As such, by taking $H = K_2$, we have that each graph $G_i$ in the sequence will also have the same edge density as $G$.

\subsection{Fractional Independence Number}
To prove that there exists a graph not optimized on the quasi-clique or quasi-star, we will need to define the fractional independence number, which is closely related to the number of homomorphisms by a result of Janson et al. (cf. Theorem \ref{subgraphCountTheorem}) \cite{janson2004upper}.

\begin{definition}[Fractional Independence Number]
\label{def:fractional-independence-number}
Given a graph, $G = (V, E)$, with vertices $v_1, ... ,v_n$, the fractional independence number denoted $\alpha^*(G)$ is

\begin{equation*}
\begin{aligned}
\max_{w_1, ... w_n} & \sum_i w_i \\
\textrm{s.t.} \; \; & w_i + w_j \leq 1, \;\; v_i v_j \in E, \\
\qquad & w_i \in [0,1].
\end{aligned}
\end{equation*}

\end{definition}

Since we can take all the $w_i = \frac{1}{2}$, we always have that $\alpha^*(G) \geq \frac{|G|}{2}$. It is well-known that the polytope defined by the constraints above is half-integral i.e. all its vertices are in $\{0, 1/2, 1\}^n$. Since we were not able to locate a good reference for this fact, we give a short proof below.

\begin{lemma}
	The fractional independence polytope is half-integral.
\end{lemma}

\begin{proof}
We'll prove the statement by contrapositive. Let $(w_1, ..., w_n)$ be in the fractional independence polytope and suppose that $w \not \in \{0,1/2,1\}^n$. Let $\epsilon_1 = \min_{w_i \not \in \{0,1/2,1\}} \min \{|w_i|, |w_i - 1/2|, |w_i - 1|\}$ and let

	\[\epsilon_2 = \min_{\substack{w_i, w_j\\v_i v_j \in E(G)\\ w_i + w_j \not = 1}} |1 - w_i - w_j|\]
	
	if there exists an edge $v_i v_j$ such that $w_i + w_j < 1$. If no such edge exists, we set $\epsilon_2 = 1$. Take $\epsilon = \min(\epsilon_1, \epsilon_2/2)$ and define $x,y \in \mathbb{R}^n$ by
	\[x_i = \begin{cases} w_i & w_i = 0, \frac{1}{2}, 1\\ w_i - \epsilon & w_i < 1/2 \\ w_i + \epsilon & w_i > 1/2, \end{cases}\]
	and 
	\[y_i = \begin{cases} w_i & w_i = 0, \frac{1}{2}, 1 \\ w_i + \epsilon & w_i < 1/2 \\ w_i - \epsilon & w_i > 1/2. \end{cases}\]
	We now claim that $x$ and $y$ are in the fractional independence polytope. First note that by our choice of $\epsilon_1$, all entries $x_i$ and $y_i$ are indeed in $[0,1]$. 
	
	We now show that $x$ is feasible. Let $v_i v_j$ be an edge in $G$. Note that if $w_i, w_j \leq 1/2$ then $x_i + x_j \leq w_i + w_j \leq 1$. So now suppose without loss of generality that $w_i > 1/2$. Then note that $w_j < 1/2$ and thus $x_i + x_j = w_i + w_j \leq 1$ as desired. 
	
	Similarly, to see that $y$ is feasible we first observe that if for some $v_i v_j \in E(G)$ we have $w_i + w_j < 1$, then $y_i + y_j \leq w_i + w_j + 2 \epsilon \leq w_i + w_j + \epsilon_2 \leq 1$. This immediately handles the case where $w_i \leq 1/2$ and $w_j < 1/2$. Since the case of $w_i = w_j = 1/2$ is trivial, we finally note that if $w_i > 1/2$, then $w_j < 1/2$ and $y_i + y_j = w_i + w_j \leq 1$. 
	
	So both $x$ and $y$ are feasible and clearly $w = (x+y)/2$. As $\epsilon > 0$, $x \not = w$ and $y \not = w$. We then conclude that $w$ is not a vertex.
	
\end{proof}

\section{Threshold Graph Maximization Results}

\subsection{A Local Move and Results for Dense Graphs}
Given a graph $G$ there is a natural way of slowly transforming it into a threshold graph (see Figure \ref{fig:localmove}): Take two vertices $u$ and $v$. Remove the edges between $u$ and $N(u) \setminus \overline{N(v)}$ and add edges from $N(u) \setminus \overline{N(v)}$ to $v$. This gives us a new graph $G'$ in which we have $N_{G'}(u) \subseteq \overline{N_{G'}(v)}$. Repeating this process takes us from $G$ to a threshold graph. Notationally, we refer to this operation on a graph $G$ as a local move on $(u,v)$. We will also simply refer to it as moving the neighbors of $u$ to $v$.

\begin{figure}
\centering
\begin{subfigure}{.5\textwidth}
  \centering
  \begin{tikzpicture}[scale=1, transform shape]

        \node[circle, inner sep=0pt, minimum size=2mm, fill, label={[align=flush center]below: \scalebox{1}{$u$}}] (u) at (0,-2) {};
    
        \node[circle, inner sep=0pt, minimum size=2mm, fill, label={[align=flush center]below: \scalebox{1}{$v$}}] (v) at (1.5,-2) {};
    
        \node (a) at (-1,0) {};
        \node (b) at (-.5,0) {};
        \node (c) at (.675,0) {};
        \node (d) at (1.5,0) {};
        \node (e) at (2,0) {};
    
        \draw (u) -- (a);
        \draw (u) -- (b);
        \draw (u) -- (c);
        \draw (v) -- (c);
        \draw (v) -- (d);
        \draw (v) -- (e);
        \draw (u) -- (v);
        
        \draw (0,0) circle [radius = 1.25cm] node[align=left, text width=2cm] {\scalebox{1}{$N(u)$}};

        \draw (1.25,0) circle [radius = 1.25cm] node[align=right, text width=2cm] {\scalebox{1}{$N(v)$}};
        \draw node at (.65,-.2) [above] {\scalebox{.5}{$N(u) \cap N(v)$}};
    
    \end{tikzpicture}
    \caption{Before applying the local move}
  
\end{subfigure}%
\begin{subfigure}{.5\textwidth}
  \centering
  \begin{tikzpicture}[scale=1, transform shape]
    
        \node[circle, inner sep=0pt, minimum size=2mm, fill, label={[align=flush center]below: \scalebox{1}{$u$}}] (u) at (0,-2) {};
    
        \node[circle, inner sep=0pt, minimum size=2mm, fill, label={[align=flush center]below: \scalebox{1}{$v$}}] (v) at (1.5,-2) {};
    
        \node (a) at (-1,0) {};
        \node (b) at (-.5,0) {};
        \node (c) at (.675,0) {};
        \node (d) at (1.5,0) {};
        \node (e) at (2,0) {};
    
        \draw (v) -- (a);
        \draw (v) -- (b);
        \draw (u) -- (c);
        \draw (v) -- (c);
        \draw (v) -- (d);
        \draw (v) -- (e);
        \draw (u) -- (v);
        
        \draw (0,0) circle [radius = 1.25cm] node[align=left, text width=2cm] {\scalebox{1}{$N(u)$}};

        \draw (1.25,0) circle [radius = 1.25cm] node[align=right, text width=2cm] {\scalebox{1}{$N(v)$}};
        \draw node at (.65,-.2) [above] {\scalebox{.5}{$N(u) \cap N(v)$}};
    
    \end{tikzpicture}
    \caption{After applying the local move}
\end{subfigure}
\caption{A neighborhood-ordering local move}
\label{fig:localmove}
\end{figure}
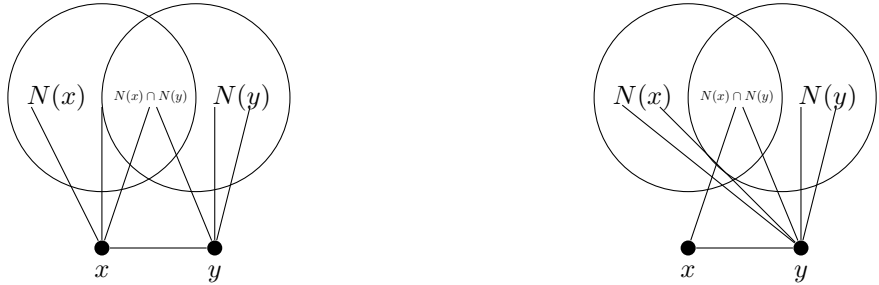

To prove Theorem \ref{thresholdMaximizationTheorem} we start by showing that this local move doesn't significantly decrease the number of homomorphisms from $H$. Towards this end, we define forbidden paths.

    \begin{definition}[Forbidden Path]
        A forbidden path is a path $wxyz$ where $wy$ and $xz$ are not edges.
    \end{definition}

We show that the homomorphisms lost after applying the local move are those that send a forbidden path in $H$ to $u$, $v$ and vertex in $N(u) \setminus \overline{N(v)}$.

    \begin{lemma}
    \label{localMoveLemma}
    Let $H$ and $G$ be graphs, $u$ and $v$ be vertices of $G$, and let $G'$ be the resulting graph after applying a local move on $(u,v)$. Then $\hom(H,G')$ is at least as large as the number of homomorphisms $\varphi \in \Hom(H,G)$ such that for all $w \in N(u) \setminus \overline{N(v)}$ and all forbidden paths $abcd$ in $H$, $\{u,v,w\} \not \subseteq \varphi(\{a,b,c,d\})$. 
        \end{lemma}

    \begin{proof} We proceed by creating an injective function that takes a homomorphism $\varphi(x)$ from $H$ to $G$ satisfying the assumptions in the theorem, and maps it to a homomorphism $\varphi'(x)$ from $H$ to $G'$. For intuition, we attempt to map $\varphi$ to a $\varphi'$ such that $\varphi'(x) = \varphi(x)$ whenever possible.
    
    	If $\varphi(x) \not = u,v$ then we simply let $\varphi'(x) = \varphi(x)$. If $\varphi(x) = u$, then we again let $\varphi'(x) = \varphi(x)$ so long as there is not an edge $xy \in E(H)$ with $\varphi(y) \in N(u) \setminus \overline{N(v)}$. If there is such an edge $xy \in E(H)$, then we instead set $\varphi'(x) = v$. Finally, if $\varphi(x) = v$ then we set $\varphi'(x)= \varphi(x)$ unless there is an edge $xy \in E(H)$ such that $\varphi(y) = u$ and $\varphi'(y) = v$. If such an edge exists, then we set $\varphi'(x) = u$. Formally,
    
    \[\varphi'(x) = \begin{cases}
                                    v & \exists y: xy \in E(H) \text{ and } \varphi(x) = u \text{ and } \varphi(y) \in N(u) \setminus \overline{N(v)} \\
                                    u & \exists y, z: xy, yz \in E(H) \text{ and } \varphi(x) = v \text{ and } \varphi(y) = u \text{ and } \varphi(z) \in N(u) \setminus \overline{N(v)} \\
                                    \varphi(x) & \text{otherwise}.
                        \end{cases}
        \]
    
     We start with the following claim.
     
     \begin{claim}
     $\varphi'$ is a homomorphism from $H$ to $G'$.	
     \end{claim}
	
	\begin{proof}
        
        It suffices to check that only the edges incident to vertices mapping to $u$ or $v$ are preserved. Let $a,b \in V(H)$ be such that $ab \in E(H)$ and $\varphi(a) = u$. Since $\varphi$ is a homomorphism, we must have that $\varphi(b) \in N(u)$. We now take cases.
    
    \begin{enumerate}[leftmargin=*, label=\emph{Case \Roman*.}]
    
    \item $\varphi(b) \in N(u) \cap N(v)$
        
         Since $\varphi(b) \not = u,v$, $\varphi'(b) = \varphi(b)$. Moreover, $\varphi'(a)\varphi'(b) \in E(G')$ since $a$ maps to either $u$ or $v$ under $\varphi'$ and both $u\varphi'(b)$ and $v\varphi'(b)$ are edges in $G'$. 

    \item $\varphi(b) \in N(u) \setminus \overline{N(v)}$.
        
        If we are in this case, then $\varphi'(a) = v$ as $ab \in E(H)$, $\varphi(a) = u$, and $\varphi(b) \in N(u) \setminus \overline{N(v)}$. Moreover, $\varphi'(b) = \varphi(b)$ as $\varphi(b) \not = u,v$. Since $v\varphi(b) \in E(G')$, we are again done.

    \item $\varphi(b) = v$ and $\varphi'(a) = u$  

        We claim that $\varphi'(b) = v$. Suppose not. Then $\exists y, z: by, yz \in E(H), \varphi(y) = u, \text{ and } \varphi(z) \in N(u) \setminus \overline{N(v)}$. Since $yz \in E(H)$, $\varphi'(y) = v$ and thus $y \not = a$. But now note $\varphi(a) = \varphi(y) = u$, so $ay \not \in E(H)$. Moreover, $bz \not \in E(H)$ since $\varphi(z) \in N(u) \setminus \overline{N(v)}$ and $\varphi(b) = v$. It then follows that $abyz$ is a path in $H$. But then $\varphi$ maps the forbidden path $abyz$ to $u,v,$ and a vertex in $N(u) \setminus \overline{N(v)}$, which is a contradiction.

    \item $\varphi(b) = v$ and $\varphi'(a) = v$

            Note that since $\varphi'(a) = v$ we must have that there is a $y$ such that $ay \in E(H)$ and $\varphi(y) \in N(u) \setminus \overline{N(v)}$. But it then follows that $\varphi'(b) = u$ since $ba, ay \in E(H)$. 

    \end{enumerate}
    
        Now let $a$ and $b$ be vertices such that $\varphi(a) = v$ and $ab \in E(H)$ where $\varphi(b) \not = u$. Then we have one of two cases
        
    \begin{enumerate}[leftmargin=*, label=\emph{Case \Roman*.}]
        
    \item $\varphi'(a) = v$
        
        Since $\varphi$ is a homomorphism we have that $\varphi(b) \in N_G(v)$. But then we have that the edge $ab$ is preserved by the homomorphism since $N_{G}(v) \subseteq N_{G'}(v)$. 

    \item $\varphi'(a) = u$
        
        In this case there exists $y, z$ such that $ay, yz \in E(H)$, $\varphi(y) = u$, and $\varphi(z) \in N(u) \setminus \overline{N(v)}$. Now we claim $\varphi(b) \in N(u)$. Suppose not, then $\varphi(b) \in N(v) \setminus \overline{N(u)}$. We now note that $bayz$ is a forbidden path: $by \not \in E(H)$ as $\varphi(y) = u$ and $\varphi(b) \not \in N(u)$ and $az \not \in E(H)$ as $\varphi(a) = v$ and $\varphi(z) \in N(u) \setminus \overline{N(v)}$. But then $bayz$ is forbidden path with $u,v$ and $\varphi(z) \in N(u) \setminus \overline{N(v)}$ in the image of $\varphi$, a contradiction.
        
    \end{enumerate}

\noindent Since these are all the cases we indeed have that $\varphi'$ is a valid homomorphism. 

\end{proof}
    
    To see that the map is injective, suppose that we had that $\varphi_1, \varphi_2 \mapsto \varphi'$. Clearly $\varphi_1(x) = \varphi_2(x)$ for all $x$ such that $\varphi'(x) \not = u,v$. Now suppose that for some $x \in V(H)$, $\varphi_1(x) = u$ and $\varphi_2(x) = v$. If $\varphi'(x) = v$, then $x$ is adjacent to some $y$ such that $\varphi_1(y) \in N(u) \setminus \overline{N(v)}$. But then $\varphi_2$ also maps $y$ to the same vertex, but $v$ is not adjacent to $\varphi_2(y)$ in $G$, which contradicts the fact that $\varphi_2$ is a homomorphism.
    
    So now suppose that $\varphi'(x) = u$. Then there exist $y, z$ such that $xy, yz \in E(H) \text{ and } \varphi_2(y) = u \text{ and } \varphi_2(z) \in N(u) \setminus \overline{N(v)}$.  But then we must have that $\varphi_1(y) = v$ since $xy \in E(H)$ and $\varphi_1(x) = u$. Similarly, we have that $\varphi'(y) = v$ as $xy \in E(H)$ and $\varphi'(x) = u$. But now $y$ is a vertex such that $\varphi_1(y) = v$ and $\varphi_2(y) = u$ and $\varphi'(y) = v$, which we just showed yields a contradiction.
    
    Hence the map is injective as desired and the statement holds.
    \end{proof}

    \begin{corollary}
        \label{exactMaxCorollary}
        Let $H$ and $G$ be graphs, $u,v \in V(G)$, and take $G'$ to be the resulting graph after applying a local move $(u,v)$ to $G$. If $H$ has no induced graphs isomorphic to $P_4$ or $C_4$, then $\hom(H,G') \geq \hom(H,G)$.
    \end{corollary}
        
    \begin{proof}
        If $H$ does not contain induced subgraphs isomorphic to $P_4$ and $C_4$, then we have that $H$ has no forbidden paths. Hence $H$ has at least as many homomorphisms to $G'$ as it does to $G$.
    \end{proof}
    
    In order to use Lemma \ref{localMoveLemma}, we'll need more quantitative bounds on the number of homomorphisms lost. Towards, this end we prove the following lemma.
    
    \begin{lemma}
        \label{badhomLemma}
        Let $H$ and $G$ be graphs with $n := |V(G)|$ and let $u,v \in V(G)$ and $S \subseteq V(G) \setminus \{u,v\}$. Then there are at most $O_H(|S| n^{|H|-3})$ homomorphisms $\varphi \in \Hom(H,G)$ such that there exists a forbidden path $abcd$ in $H$ with $u,v,s \in \varphi(\{a,b,c,d\})$ for some $s \in S$. 
    \end{lemma}
        
    \begin{proof}
	Let $abcd$ be a forbidden path in $H$. Then we must choose one of the four vertices to map to $u$, one of the remaining $3$ to map to $v$, and one of the remaining two to map to a vertex in $S$. The remaining $|H|-3$ vertices in the graph could go to any of $n$ vertices. Hence, we have that there are at most
            \[4! |S| n^{|H|-3}\]
        such homomorphisms. Since there are only a constant number of forbidden paths in $H$, which we denote by $k_H$, we have that there are at most
            \[4! k_H |S| n^{|H|-3} = O_H(|S| n^{|H|-3})\]
        homormophism mapping a forbidden path to set including $u$, $v$ and a vertex from $S$.
    \end{proof}
    
   	With this lemma in hand, we note that moving the neighbors of $u$ to $v$ removes at most $O_H(|N(u) \setminus \overline{N(v)}| n^{|H|-3})$. Thus to argue we didn't lose too many homomorphisms after applying multiple local moves we define
    
    \begin{definition}[Total Movement]
        Let $G = G_0, G_1, ..., G_t$ be a sequence of graphs from applying local moves $(u_1, v_1), ..., (u_t, v_t)$ i.e. each $G_i$ is obtained by applying the local move to the vertices $(u_i, v_i)$ in the graph $G_{i-1}$. Define the total movement after these $t$ moves as
            \[\sum_{i=1}^t \left |N_{G_{t-1}}(u_i) \setminus \overline{N_{G_{t-1}}(v_i)} \right|.\]
    \end{definition}
    
	We now claim that we can turn $G$ into a threshold graph with small amount of total movement.
        
    \begin{lemma}
    \label{transformationLemma}
    Let $G$ be a graph. Then using local moves we can turn $G$ into a threshold graph with at most $n^2$ moves and with total movement at most $|E(G)|$.
    \end{lemma}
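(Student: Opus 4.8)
The plan is to transform $G$ into a threshold graph in \emph{rounds}, freezing at least one vertex per round. In a round I pick any vertex $u$ of the current graph and apply the local move to each pair $(u,v)$, $v\neq u$, in an arbitrary order. The first thing to verify is a structural claim: at the end of the round $N(w)\subseteq\overline{N(u)}$ holds for \emph{every} vertex $w$. Right after the move $(u,v)$ we have $N(v)\subseteq\overline{N(u)}$ by construction, so it suffices to check that this relation, once established for some $v$, is preserved by every subsequent move $(u,v')$ of the round; this holds because such a move can only enlarge $\overline{N(u)}$ and, for a third vertex, can only delete $v'$ from its neighborhood while simultaneously adding $u$ to it. Granting the claim, every non-neighbor of $u$ has empty neighborhood, so the round leaves a graph consisting of $u$ joined completely to $A:=N(u)$, the graph induced on $A$, and a set of isolated vertices; such a graph is threshold if and only if the graph induced on $A$ is. I then freeze $u$ together with the isolated vertices and recurse on $A$, noting that local moves made inside $A$ never affect an edge at $u$ (since $u$ dominates $A$) nor any frozen isolated vertex (it has, and can gain, no edges). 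The base case is a graph with no edges, which is threshold.

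Counting moves is then routine: a round on $k$ active vertices uses at most $k-1$ moves, the active set strictly shrinks each round, so there are at most $n$ rounds and at most $n(n-1)\le n^2$ moves in total.

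The bulk of the work is the bound on total movement. The key observation is that the local move \emph{conserves the number of edges}: it deletes the $|N(v)\setminus\overline{N(u)}|$ edges joining $v$ to that set and creates exactly that many edges joining $u$ to that set, and no created edge was already present (by definition of the set). I would then establish two facts about a single round with pivot $u$: (i) every move relocates edges so that each becomes incident to $u$; and (ii) an edge once incident to $u$ is neither relocated nor destroyed again during the round. Together these say that the number of edges relocated in the round — which is exactly the movement of the round — equals the net increase of $\deg(u)$, namely $|A|-\deg_{\mathrm{before}}(u)\le|A|$. Finally I track edge counts across rounds: if round $j$ starts on a graph with $m_{j-1}$ edges, the structural description together with edge conservation says this graph splits into $|A_j|$ edges at the pivot and $m_j:=m_{j-1}-|A_j|$ edges inside $A_j$, and round $j+1$ operates on the latter. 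Since $m_r\ge 0$ at the end, $\sum_j |A_j|\le m_0=|E|$, and summing the per-round bounds gives total movement $\le \sum_j |A_j|\le |E|$.

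I expect the movement bound to be the main obstacle — not any single computation, but the realization that it telescopes: once one sees that the move conserves edges and that a round's movement equals the growth of the pivot's degree, which is precisely the number of edges handed down to the recursion on $A_j$, the whole sum collapses to $|E|$. The other delicate point is the invariant-preservation argument showing that a round genuinely makes the pivot dominate everything except the truly isolated vertices.
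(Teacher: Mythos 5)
Your proof is correct and follows essentially the same route as the paper's: fix a pivot, move every other vertex's neighborhood onto it so that it dominates all non-isolated vertices, recurse on the pivot's neighborhood, and charge each round's movement to the edges that become incident to the pivot (which are exactly the edges not passed to the recursion), so the bound telescopes to $|E|$. The only cosmetic difference is that the paper takes the maximum-degree vertex as the pivot, a choice its argument never actually uses, and phrases the rounds as an induction on $n$.
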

    
    \begin{proof}
    We prove this by induction on the number of vertices. 
    
    For the base case, note that a graph with a single vertex is threshold. Now assume the statement holds for graphs with at most $n$ vertices and let $G$ be a graph on $n+1$ vertices. Let $v_{n+1}$ be the vertex of maximum degree and $v_1, ..., v_n$ be the remaining vertices. Consider applying the following $n$ moves: Move the neighbors of $v_1$ to $v_{n+1}$, then move the neighbors of $v_2$ to $v_{n+1}$, continue in this way until moving the neighbors from $v_{n}$ to $v_{n+1}$. 
    
    Let $G'$ be the graph after applying all the above moves. Clearly $N_{G'}(v_1) \cup N_{G'}(v_2) \cup ... \cup N_{G'}(v_n) \subseteq \overline{N_{G'}(v_{n+1})}$. Hence if $v_i v_j \in E(G)$ and $j \not = n+1$, then we have that $v_{n+1} v_j \in E(G')$. We then conclude that if $v_{n+1} v_j \not \in E(G')$ for $j \not = n+1$, then it must be the case that $v_j$ is isolated. 
    
    Let $I$ denote the vertices in $G'$ that are isolated. Applying the inductive hypothesis to $G'[V \setminus (I \cup \{v_{n+1}\})]$, we have that there is a set of at most $(n-1)^2$ moves that transforms $G'[V \setminus (I \cup \{v_{n+1}\})]$ to a threshold graph $T$ with total movement at most $|E(G'[V \setminus (I \cup \{v_{n+1}\})])|$. Applying these moves to $G'$ gives us a graph $G''$. But now note that $G''$ can be described as adding a dominating vertex to $T$ followed by $|I|$ isolated vertices. So $G''$ is a threshold graph. Note that we made at most $(n-1)^2 + n \leq n^2$ moves. Moreover note that any edge incident to $v_{n+1}$ was moved at most once by the initial set of $n$ moves. Hence we incur at most $|E|$ movement cost, as desired.
    \end{proof}
    
    We can now prove a strengthening of Theorem \ref{thresholdMaximizationTheorem}. 
    
    \begin{theorem}
    \label{quantitativeThresholdMaximizationTheorem}
    	For any graph $G$ with $n$ vertices and any graph $H$, there exists a threshold graph $T$ on $n$ vertices with $|E(T)| \leq |E(G)|$ such that
            \[\hom(H,T) \geq \hom(H,G) - O_{H}(n^{|H|-1}). \]
    \end{theorem}

    \begin{proof}
		 We use the local moves to transform the true maximum graph into a threshold graph and argue that we haven't lost too many homomorphisms. By Lemma \ref{transformationLemma}, there exists a sequence of local moves on pairs $(u_1, v_1)$, ..., $(u_k, v_k)$ with total movement at most $|E(G)|$ that transforms $G$ into a threshold graph $T$. Let $G_i$ be the graph defined by applying $(u_i, v_i)$ to $G_{i-1}$. By Lemmas \ref{localMoveLemma} and Lemma \ref{badhomLemma}, we have that	
		 	\[\hom(H, G_i) \geq \hom(H, G_{i-1}) - O_H(|N(u_i) \setminus \overline{N(v_i)}| n^{|H|-3} ).\]
		 We thus conclude that
		 	\[\hom(H,T)\geq \hom(H,G) - \sum_{i=1}^k O_H(|N(u_i) \setminus \overline{N(v_i)}| n^{|H|-3} ) \geq \hom(H,G) - O_H( n^{|H|-1} ) \]
		 where the final inequality uses that the total movement of our sequence of local moves is at most $|E(G)| \leq n^2$. 
    \end{proof}
    
    \begin{remark}
        Note that such a result will not hold non-asymptotically for $\mathcal{M}(H,n,m)$. Specifically, we note that no threshold graph achieves $\mathcal{M}(C_4, 4, 4)$. 
    \end{remark}
    
\subsection{Extension To Sparse Graphs}
We now extend this result to sparse graphs by strengthening Lemma \ref{badhomLemma}. To do this, we rely on a result of \cite{janson2004upper}.

\begin{theorem}[Janson et al. \cite{janson2004upper}]
    \label{subgraphCountTheorem}
    Let $H$ be a graph and $m \geq |E(H)|$ and $n \geq |H|$ with $n \leq m \leq \binom{n}{2}$. Then 
        \[\mathcal{M}(H,n,m) = \Theta_H(m^{|H| - \alpha^*(H)} n^{2 \alpha^*(H) - |H|}), \]
    where $\alpha^*(H)$ denotes the fractional independence number of the graph.
\end{theorem}

From this, we easily obtain the following lemma:

\begin{lemma}
\label{subgraphHoms}
Let $H$ be a graph, $k$ be a non-negative integer, and $H'$ be an induced subgraph of $H$ on $|H|-k$ vertices. Then for every $n \geq |H|$ and $n \leq m \leq \binom{n}{2}$ with $m \geq |E(H)|$
	\[\mathcal{M}(H',n,m) = O_H \left(\mathcal{M}(H,n,m) /\left(\frac{m}{n} \right)^k \right). \]
\end{lemma}

\begin{proof}
Let $\Delta = \alpha^*(H) - \alpha^*(H') \geq 0$. By Theorem \ref{subgraphCountTheorem} and some algebraic manipulation we see that
\begin{align*}
\mathcal{M}(H',n,m) & = O_H \left( \left(\frac{m}{n} \right)^{|H'| - \alpha^*(H')} n^{\alpha^*(H')} \right) \\
& = O_H \left( \left(\frac{m}{n} \right)^{|H| - k - \alpha^*(H) + \Delta} n^{\alpha^*(H) - \Delta} \right) \\
& = O_H \left( \left(\frac{m}{n}\right)^{-k} \left( \frac{m}{n^2} \right)^\Delta \left( \frac{m}{n} \right)^{|H| - \alpha^*(H)} n^{\alpha^*(H)} \right) \\
& = O_H(\mathcal{M}(H,n,m)/\left(\frac{m}{n} \right)^k).
\end{align*}

where the last equality used that $\mathcal{M}(H,n,m) = \Omega_H(m^{|H|-\alpha^*(H)} n^{2 \alpha^*(H) - |H|})$ and $m \leq n^2$.

\end{proof}

Note that the above lemma can be proved without the result of Theorem \ref{subgraphCountTheorem} by taking a graph that approximately achieves $\mathcal{M}(H',n/2,m/4)$ and adding $\Omega(m/n)$ dominating vertices to it to get a graph with many copies of $H$. However, the above proof is simpler to present.

\begin{lemma}
\label{sparsebadHomLemma}
Let $H$ and $G$ be graphs, where $G$ has $n \geq |H|$ vertices and $m \geq \max(|E(H)|, n)$ edges, and let $u,v \in V(G)$ and $S \subseteq V(G) \setminus \{u,v\}$. Then we have that there are at most $O_H(|S| \mathcal{M}(H,n,m)/\left(\frac{m}{n} \right)^3)$ homomorphisms $\varphi \in \Hom(H,G)$ such there exists a forbidden path $abcd \in H$ in which $\{u,v,s\} \subseteq \varphi(\{a,b,c,d\})$ for some $s \in S$.
\end{lemma}

\begin{proof}
Let $abcd$ be a forbidden path. Then we have that there are clearly $O(|S|)$ ways to map $3$ vertices from $a,b,c,d$ into $u,v$ and some vertex from $|S|$. For ease of notation, assume these three vertices are $a,b,c$. Now consider $H' = H[V(H) \setminus \{a,b,c\}]$. We note that the number of homomorphisms is at most $O_H(|S| \mathcal{M}(H',n,m))$. So it suffices to show that $\mathcal{M}(H',n,m) = O_H \left(\mathcal{M}(H,n,m)/\left(\frac{m}{n} \right)^3 \right)$. But this follows from Lemma \ref{subgraphHoms}.
\end{proof}

We can now prove our strengthening of Theorem \ref{quantitativeThresholdMaximizationTheorem}.

\begin{theorem}
    For any graph $H$ and integers $n$ and $m \leq \binom{n}{2}$, there exists a threshold graph $T$ on $n$ vertices and at most $m$ edges such that
            \[\hom(H,T) \geq \left( 1 - O_{H} \left (\frac{n^3}{m^2} \right ) \right) \mathcal{M}(H,n,m). \]
\end{theorem}

\begin{proof}
The proof follows almost identically to Theorem \ref{thresholdMaximizationTheorem}. Let $G^\star$ be the graph on $n$ vertices and $m$ edges with
	\[\hom(H,G^\star) = \mathcal{M}(H, n, m).\]

 By Lemma \ref{transformationLemma}, there exists a sequence of local moves on pairs $(u_1, v_1)$, ..., $(u_k, v_k)$ with total movement at most $|E(G^\star)|$ that transforms $G^\star$ into a threshold graph $T$. Let $G_i^\star$ be the graph defined by applying $(u_i, v_i)$ to $G_{i-1}^\star$. By Lemmas \ref{localMoveLemma} and Lemma \ref{sparsebadHomLemma}, we have that	
		 	\[\hom(H, G_i^\star) \geq \hom(H, G_{i-1}^\star) - O_H \left(|N(u_i) \setminus \overline{N(v_i)} | \cdot \mathcal{M}(H,n,m)/ \left(\frac{m}{n} \right)^3 \right).\]
		 We thus conclude that
		 	\begin{align*}
		 		\hom(H,T) &\geq \hom(H,G^\star) - \sum_{i=1}^k O_H(|N(u_i) \setminus \overline{N(v_i)} | \cdot \mathcal{M}(H,n,m)/ \left(\frac{m}{n} \right)^3 ) \\
		 		 &\geq \mathcal{M}(H,n,m) - O_H \left( \frac{n^3}{m^2} \cdot \mathcal{M}(H,n,m) \right)
		 	\end{align*}
		 where the final inequality uses the total movement of our moves is $|E(G)|$. 
\end{proof}

\subsection{Extension to Hypergraphs}
For hypergraphs, we only prove results for asymptotic maximization in the dense case. As before, we use a sequence of moves to transform our hypergraph. We can view this as a shifting or compression argument, which are common in extremal set theory.

\begin{definition}[Hypergraph Move]
Let $G = (V,E)$ be a $k$-uniform hypergraph and $u,v$ be two vertices in $G$. We say a \textit{hypergraph move} from $u$ to $v$ in $G$ is as follows: For each hyperedge $e \in E(G)$ containing $u$ and not $v$ such that $(e \setminus \{u\}) \cup \{v\} \not \in E(G)$, remove $e$ and add $(e \setminus \{u\}) \cup \{v\}$.	
\end{definition}

Note that this move generalizes our local move for graphs. That said, we will be able to get by with a worse bound on the number of homomorphisms lost from such a hypergraph move: 

\begin{lemma}
    \label{localMoveHypergraphLemma}
    Let $H$ and $G$ be $k$-uniform hypergraphs and let $G'$ be the resulting hypergraph after applying a hypergraph move from $u$ to $v$ in $G$. Then the number of homomorphisms from $H$ to $G'$ is at least the number of homomorphisms $\varphi$ from $H$ to $G$ that do not use both $u$ and $v$.
\end{lemma}

\begin{proof}
We again construct an injective map from a subset of $\hom(H,G)$ to $\hom(H,G')$. Specifically, given $\varphi \in \hom(H,G)$, we map it to

\[\varphi'(x) = \begin{cases} 
	v & \varphi(x) = u \text{ and }\exists e \in E(H): x \in e \text{ and } (\varphi(e) \setminus \{u\}) \cup \{v\} \not \in E(G) \\
	\varphi(x) & \text{otherwise}.
\end{cases}\]

We can easily verify that $\varphi'$ is a homomorphism if $\varphi$ doesn't use both $u$ and $v$. Moreover, suppose $\varphi_1, \varphi_2 \in \hom(H,G)$ both map to a homomorphism $\varphi' \in \hom(H,G')$. Towards a contradiction, suppose $\varphi_1 \not = \varphi_2$. By definition, $\varphi_1(x) = \varphi_2(x) = \varphi'(x)$ for all $x$ such that $\varphi_1(x) \not = u$ and $\varphi_2(x) \not = u$. So without loss of generality there exists an $x$ such that $\varphi_1(x) = u$ and $\varphi_2(x) = v$. Then $\varphi'(x) = v$ and there exists an $e \in E(H)$ containing $x$ such that $(\varphi_1(e) \setminus \{u\}) \cup \{v\} \not \in E(G)$. But now note that $\varphi_2(e) = (\varphi_1(e) \setminus \{u\}) \cup \{v\} \not \in E(G)$, so we have reached a contradiction. 
\end{proof}

The key lemma of this section will be the following result to transform hypergraphs into threshold hypergraphs:

\begin{lemma}
\label{lem:hypergraphTransformation}
Let $G$ be a $k$-uniform hypergraphs with $n$ vertices, then $G$ can be transformed into a threshold hypergraph $T$ by removing $o_k(n^k)$ edges and using $o_k(n^2)$ hypergraph moves.
\end{lemma}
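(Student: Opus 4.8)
### Proof Proposal for Lemma~\ref{lem:hypergraphTransformation}

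The plan is to mimic the inductive strategy of Lemma~\ref{transformationLemma}, but to process vertices in large batches rather than one at a time, using the dominating-set bound of Lemma~\ref{lem:DomSet} to control the number of local moves. Recall that in the graph case we picked the max-degree vertex $v_{n+1}$, pushed every other vertex's neighborhood onto it, and recursed on the non-isolated remainder. For hypergraphs we cannot afford $n$ moves per recursion level (that would give $\Theta(n^2)$ at the top level only if we are careful, and the edge-deletion bookkeeping is worse), so instead I would at each stage identify a \emph{dominating} vertex in the $\ll$-order — a vertex $v$ with $u \ll v$ for ``many'' $u$ — and make it dominating by local moves, after first deleting a small number of edges to make such a vertex exist.

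Here is the order of steps. First, set up the auxiliary bipartite incidence structure: let $A = V(G)$ and let $B$ be the set of edges of $G$ (so $|B| \le \binom{n}{k} = O(n^k)$), with $v \in A$ adjacent to $e \in B$ iff $v \in e$. The minimum degree of a vertex of $B$ is exactly $k \ge 2$, which is too small for Lemma~\ref{lem:DomSet} to be useful directly; so the real first step is an \emph{edge-cleaning} step: delete all edges incident to every vertex whose degree in $G$ is below some threshold $\delta = \delta(n)$ with $\delta = \omega(1)$ and $\delta = n^{o(1)}$ (e.g.\ $\delta = \log^2 n$). Since each such low-degree vertex kills at most $\delta$ edges and there are at most $n$ of them, this removes at most $n\delta = o(n^k)$ edges (as $k \ge 2$), and the remaining hypergraph $G_0$ has min degree $\ge \delta$. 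Now on $G_0$ apply Lemma~\ref{lem:DomSet} with this $\delta$ to get a dominating set $D \subseteq A$ of size $O(n \log n / \delta) = o(n)$: every edge of $G_0$ meets $D$. Second, pick any single vertex $w \in D$ and make it a dominating vertex of the hypergraph: for every vertex $u \ne w$, apply the hypergraph local move pushing $u$'s edges onto $w$ (this is legitimate by the construction of the move). After doing this for all $u$, every edge of the current hypergraph that is ``movable'' onto $w$ has been moved, so $w$ now lies in a full dominating family of hyperedges; the vertices not covered — those $u$ for which some edge through $u$ could not be moved onto $w$ without already being present — form a set $I$ of vertices whose edges are confined to $\overline{N}$-type structure, and I would argue as in Lemma~\ref{transformationLemma} that $G$ restricted to $V \setminus (\{w\} \cup I)$ is what we recurse on, with $w$ prepended as a dominating vertex and $I$ appended as isolated vertices. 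Third, recurse. The recursion depth is $n$, each level uses $O(n)$ local moves, giving $O(n^2)$ moves total — but we need $o(n^2)$. To get the little-$o$, I would instead make \emph{all} of $D$ dominating in one sweep (or observe that after one sweep the leftover graph has at most $|I| + |D| $ ``active'' vertices and $|D| = o(n)$), so that the recursion collapses after $O(\log n)$ or $O(1)$ rounds of genuine work, and the move count is $o(n) \cdot n = o(n^2)$; the precise batching is a routine but slightly fussy accounting. Similarly the edge deletions accumulate only across $O(\log n)$ rounds, each round deleting $o(n^k)$, so the total stays $o(n^k)$.

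The main obstacle I anticipate is the interaction between the two budgets across the recursion. The edge-deletion step lowers the vertex count of the ``live'' part, but each recursive call re-runs the cleaning step, and one must ensure the deleted edges sum to $o(n^k)$ rather than, say, $O(n^{k-1+\epsilon}) \cdot (\text{depth})$ blowing past the target; this forces the depth to be at most polylogarithmic, which is why the one-shot dominating-set batching (rather than a one-vertex-at-a-time induction) is essential. A secondary subtlety is verifying that after a batch of local moves the residual hypergraph genuinely decomposes as ``(threshold part obtained by recursion) $+$ (dominating vertices from $D$) $+$ (isolated-type leftovers $I$)'' in the hypergraph sense of the $\ll$ order — the graph argument in Lemma~\ref{transformationLemma} used the closed-neighborhood containment characterization, and the analogous bookkeeping for $k$-regular hypergraphs with the $\ll$ relation needs to be checked carefully but should go through. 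Once the move count is $o(n^2)$ and the deletion count is $o(n^k)$, we are done, and note that we have not yet invoked any bound on homomorphisms lost — that is deferred to the proof of Theorem~\ref{hypergraphThresholdMaximizationTheorem}, which will combine this lemma with Lemma~\ref{localMoveHypergraphLemma} exactly as the dense graph case combined Lemma~\ref{transformationLemma} with Lemma~\ref{localMoveLemma}.
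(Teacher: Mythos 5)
Your proposal has the right ingredients (an edge-cleaning step, the dominating-set lemma, recursion), but two steps as written do not go through, and they are exactly the two places where the paper's argument does something different.

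First, the bipartite incidence structure is set up on the wrong side. You take $B = E(G)$, observe that every $e \in B$ has degree exactly $k$ in the incidence graph, and then propose to fix this by deleting edges through low-degree \emph{vertices} of $G$. But that cleaning raises the minimum degree on the $A$ side only: afterwards every hyperedge still contains exactly $k$ vertices, so the $B$-side minimum degree is still $k$ and Lemma \ref{lem:DomSet} still only yields a dominating set of useless size $O(n\log n/k)$. Moreover, a dominating set for $B=E(G)$ is just a transversal of the hypergraph, which is not the object needed to certify the relation $v \ll d$. The paper instead takes $B$ to be the set of $(k-1)$-subsets $S$ contained in some hyperedge, with $S$ adjacent to $v$ iff $S\cup\{v\}\in E(G)$. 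Then the degree of $S$ counts its extensions to edges; it can be forced up to $\sqrt{n}$ by deleting at most $\sqrt{n}\cdot n^{k-1}=o(n^k)$ edges over the \emph{entire} process (each $S$ is cleaned at most once globally, so no depth restriction is needed); and a dominating set $D$ of $B$ means every shadow $e\setminus\{v\}$ of every surviving edge extends to an edge through some $d_i\in D$, which is precisely what is needed below.

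Second, the move count does not close. Making a single vertex $w$ dominating by pushing every other vertex's edges onto it costs $n-1$ moves per recursion level, and the recursion peels off one vertex per level, so this gives $\Theta(n^2)$. You notice this and propose to make all of $D$ dominating in one sweep so that the recursion collapses after $O(\log n)$ or $O(1)$ rounds, but there is no argument that the problem size drops geometrically, and I do not see why it would. The paper's resolution is that only $|D|-1 = O(\sqrt{n}\log n)$ moves are needed per level: one consolidates the dominating set onto a single vertex $d_\ell$ (moves $d_i\to d_\ell$ for $i<\ell$ only), and the domination of the $(k-1)$-shadows by $D$ then gives $v\ll d_\ell$ for \emph{every} vertex $v$, with no moves touching the non-dominating vertices at all. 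The recursion still has depth $n$, but the total is $O(n^{3/2}\log n)=o(n^2)$ moves. As a result, neither the artificial set $I$ of ``isolated-type leftovers'' nor the polylogarithmic-depth batching is needed.
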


This quickly gives Theorem \ref{hypergraphThresholdMaximizationTheorem}.

\hypergraphThresholdMaximizationTheorem*

\begin{proof}
Let $G$ be a hypergraph with $n$ vertices. By Lemma \ref{lem:hypergraphTransformation} we can use $o_k(n^2)$ hypergraph moves $(u_1, v_1), ..., (u_\ell, v_\ell)$ and remove at most $o_k(n^k)$ edges to turn it into a threshold hypergraph $T$. Now note the number of homormorphisms $H$ to any hypergraph $G'$ that use any two specific vertices $u,v \in V(G')$ is at most  $O_H(n^{|H|-2})$. Similarly, there are at most $O_{H,k}(n^{|H|-k})$ homomorphisms that use every vertex in a hyperedge $e \in E(G')$. Thus by Lemma \ref{localMoveHypergraphLemma} each hypergraph move $(u_i, v_i)$ removes at most $O_H(n^{|H|-2})$ homomorphisms. Moreover, removing a hyperedge $e$ removes at most $O_{H,k}(n^{|H|-k})$ homomorphisms. We conclude that 
		\[\hom(H,T) \geq \hom(H,G) - o_k(n^2) \cdot O_H(n^{|H|-2}) - o_k(n^k) \cdot O_{H,k}(n^{|H|-k}) = \hom(H,G) - o_{H,k}(n^{|H|}). \] 
\end{proof}

We now proceed to prove Lemma \ref{lem:hypergraphTransformation}. Like in the case of graphs, we will apply a set of local moves to $G$, yielding a graph $G'$, in such a way that there exists a vertex $v \in G'$ with $u \ll_{G'} v$ for all $u \in V(G')$. We then repeat the process on $G'$ to slowly transform it into a threshold hypergraph. One way to do this would of course be to apply a local move from every vertex $u \in V(G)$ to $v$, however, this would result in a total of $\Omega(n^2)$ moves. To improve on this, we'll show that we only need to apply moves from a neighborhood-dominating set to $v$.

\begin{definition}[Neighborhood-Dominating Set]
	Given a $k$-uniform hypergraph $G=(V,E)$, a vertex $v \in V$, and a subset $S \subseteq V$, a neighborhood-dominating set $D \subseteq V$ of $S$ with respect to $v$ is a set of vertices such that for every vertex $s \in S$ and hyperedge $e$ containing $s$ either $v \in e$ or there exists a $d \in D$ such that $(e \setminus \{s\}) \cup \{d\} \in E(G)$.
\end{definition}

Of course, this is only useful if there are neighborhood-dominating sets of size $o(n)$. To prove that small neighborhood-dominating sets exist, we'll apply the following lemma to a suitably defined graph corresponding to our hypergraph $G$.

\begin{lemma}
\label{lem:DomSet}
Let $G = (A \cup B, E)$ be a bipartite graph with $|A| = n$ and $|B| \leq n^k$ for a positive integer $k$. Moreover, assume that the minimum degree of a vertex in $B$ is $\delta$, then there exists a set $D \subseteq A$ such that $N_G(D) = B$ and $|D| = O_k (n\log(n)/\delta)$.
\end{lemma}

\begin{proof}
Assume that $\delta > k \log(n)$ as otherwise we can simply take $D = A$. We will follow the classical proof for dominating sets given in \cite{alon2004probabilistic}. Take a random set $S$ such that every vertex of $A$ is in $S$ independently with probability $p$ and let $N$ denote the set of vertices in $B$ without a neighbor in $S$. Build a set $D$ by adding a neighbor of each vertex in $N$ to $S$. Then observe that $N_G(D) = B$ and in expectation we have 
	\[\E[|D|] \leq \E[|S| + |N|] \leq pn + n^k (1-p)^{\delta} \leq pn + n^k e^{-p \delta}.\]

Taking $p = \frac{1}{\delta} \log(\delta n^{k-1})$ then gives
	\[E[|D|] \leq \frac{kn\log(n)}{\delta} + \frac{n}{\delta} = O_k \left( \frac{n\log(n)}{\delta} \right).\]
\end{proof}

We now associate some graphs to our hypergraphs:

\begin{definition}[Incidence Graph]
	Given a $k$-uniform hypergraph $G = (V,E)$, we associate a bipartite incidence graph $I = (A \cup B, E(I))$. Each vertex in $A$ will correspond to a vertex in $G$ and each vertex in $B$ corresponds to a subset $S \in \binom{[n]}{k-1}$ such that $S \subset e \in E(G)$. There is an edge between a vertex $v \in A$ and a subset $S \in B$ if $\{v\} \cup S \in E(G)$.
\end{definition}

In particular, we will often be interested in induced subgraphs of the incidence graph with respect to a vertex $v$.

\begin{definition}[Induced Incidence Graphs]
Given a $k$-uniform hypergraph $G = (V,E)$, a set of vertices $S \subseteq V$, and a $v \in V$, we say the the incidence graph induced by $S$ with respect to $v$ is $I[S \cup N_I(S) \setminus \{b \in \{\binom{[n]}{k-1}: v \in b \text{ or } b \cup v \in E\}]$, where $I = (A \cup B, E(I))$ is the incidence graph of $G$.
\end{definition}

With these definitions in hand, we get the following corollary. 

\begin{corollary}
\label{cor:neighborhoodDomSet}
	Let $G = (V,E)$ be a $k$-uniform hypergraph on $n$ vertices, $S \subseteq V$, $v \in V$, and $I= (A \cup B, E(I))$ be the incidence graph of $G$ induced by $S$ with respect to $v$. If every vertex $b \in B$ has degree at least $n^{2/3}$, then there exists a neighborhood-dominating set $D \subseteq S$ of $S$ with respect to $v$ of size at most $O_k(\sqrt{n})$.
\end{corollary}

\begin{proof}
	First note that if $|S| < \sqrt{n} + 1$ we can take $D = S$. So we assume $|S| \geq \sqrt{n} + 1$. We then have that $|B| \leq n^{k-1} \leq |A|^{2k-2}$. By Lemma \ref{lem:DomSet}, it follows that there exists a set $D \subseteq S$ of size at most $O_k(n \log(n) / n^{2/3}) = O_k(\sqrt{n})$ such that $N_I(D) = B$.
	
	We claim that $D \cup \{v\}$ is a dominating set of $S$ with respect to $v$. Indeed, let $s \in S$ and $e$ be an edge containing $s$ with $v \not \in e$. If $e \setminus \{s\} \cup \{v\} \in E$ then we are done, so suppose this is not the case. It then follows that $e \setminus \{s\} \in B$, so there exists a vertex $d \in D$ that is a neighbor of $e \setminus \{s\}$ in $I$, which implies $(e \setminus \{s\}) \cup \{d\} \in E(G)$, as desired.
\end{proof}

Our general scheme to transform our hypergraph will now be to remove edges of low degree so the induced incidence graph has high minimum degree and then apply local moves from our dominating sets to $v$. We will then repeat this process to find a dominating vertex $u$ of $V \setminus \{v\}$.

Formally, we define the following domination procedure from $S$ to $v$: Let $G_0$ be a hypergraph, $S \subset V(G_0)$, and $v \in V(G_0)$. Define a sequence of graphs $G_0, G_1, ..., G_k$ as follows: Let $I_i = (A_i \cup B_i, E(I_i))$ be the incidence graph of $G_i$ induced by $S$ and with respect to $v$. While there exists a $b \in B_i$ with $\deg(b) \leq n^{2/3}$ remove all edges of the form $b \cup \{s\}$ for some $s \in S$ from $G_i$ to get a graph $H_i$. Let $D_i$ be the smallest dominating set of $S$ with respect to $v$ in $H_i$. Apply local moves from each vertex $d \in D_i$ to $v$, yielding $G_{i+1}$.

\begin{lemma}
\label{lem:dominating-set-moves}
After running the domination procedure from $S$ to $v$ we have that $s \ll_{G_k} v$ for all $s \in S$. 
\end{lemma}

\begin{proof}
Fix $s \in S$ and an edge $e \in E(G_k)$ containing $s$ such that $v \not \in e$. We'll show that $e \setminus \{s\} \cup \{v\} \in E(G_k)$. First, observe that we never remove hyperedges containing $v$ as no subsets $b$ in the induced incidence graph contain $v$ or are neighbors of $v$. Since we only add edges incident to $v$, $e \in E(G_i)$ for all $i$. It then follows that there exists $d_1, ..., d_k$ such that $e \setminus \{s\} \cup \{d_i\} \in E(H_i)$. Note that if $e \setminus \{s\} \cup \{d_i\}$ exists when we apply the local move from $d_i$ to $v$, then clearly $e \setminus \{s\} \cup \{v\} \in G_{i+1}$. As we never remove edges containing $v$, $e \setminus \{s\} \cup \{v\} \in G_{k}$.

	Now towards a contradiction, suppose that for each $i$ we applied a local move from $d_i' \in D_i$ to $v$ before the move from $d_i$ to $v$, and the move from $d_i'$ causes the removal of $e \setminus \{s\} \cup \{d_i\}$. Observe that $d_i' \not = d_j'$ for $i < j$. Indeed suppose $d_i' = d_j'$. Note $d_i' \ll_{G_{i+1}} v$. Since we only add edges incident to $v$ and never remove edges containing $v$, we have that $d_i' \ll_{H_{\ell}} v$ for all $\ell > i$. In particular, at the time of making the move from $d_j'$ to $v$ we have that $d_j' \ll v$, so the local move from $d_j'$ to $v$ would remove no edges, a contradiction. 
		
	But we now observe that $\{d_1', ..., d_k'\} \subseteq e \setminus \{s\}$, a contradiction as $|e \setminus \{s\}| = k - 1$.
\end{proof}

It now remains to show that removing edges of low degree and applying local moves does not ruin the order we have built.

\begin{lemma}
\label{lem:subgraphMoves}
Let $G$ be a $k$-uniform hypergraph and let $u,v,w,x \in V(G)$. Moreover, assume $x \not = u,v$, $u \ll_G x$, $v \ll_G x$, and $w \ll_G x$. Let $G'$ be the resulting hypergraph after applying a hypergraph move from $u$ to $v$ in $G$. Then $w \ll_{G'} x$.
\end{lemma}

\begin{proof}

We start with the case that $w = v$. 

\begin{claim}
	$v \ll_{G'} x$
\end{claim}

\begin{proof}

Let $e \in E(G')$ be a hyperedge containing $v$ and not containing $x$. We'll show that $f = (e \setminus \{v\}) \cup \{x\} \in E(G')$. We now take two cases.

\begin{enumerate}[leftmargin=*, label=\emph{Case \Roman*.}]
    \item $e \in E(G)$
    
    Since $v \ll_G x$, we must have that $f \in E(G)$. If $u \not \in f$, then it cannot be removed by the local move and $f \in E(G')$. On the other hand, if $u \in f$ then $u \in e$. But now since $u \ll _G x$, $g = (e \setminus \{u\}) \cup \{x\} \in E(G)$. But $g = (f \setminus \{u\}) \cup \{v\}$, which implies $f$ isn't removed in the local move and $f \in E(G')$.
    \item $e \not \in E(G)$

    	Since $e \not \in E(G)$, it must have been added during the hypergraph move. Thus, $(e \setminus \{v\}) \cup \{u\} \in E(G)$. Since $u \ll_G x$, $(e \setminus \{v\}) \cup \{x\} = f \in E(G)$. Now note that $u \not \in f$ as $u \not \in e$. But then $f$ cannot be removed during the hypergraph move and $f \in E(G')$.
\end{enumerate}

\end{proof}

Since after applying the hypergraph move we have that $u \ll_{G'} v$, we have also shown that $u \ll_{G'} x$. So we now handle the case of $w \not = u,v$. 

\begin{claim}
	$w \ll_{G'} x$ for $w \not = u,v$
\end{claim}

\begin{proof}
	Let $e \in E(G')$ be an edge containing $w$ and not containing $x$. We'll again show $f = (e \setminus \{w\}) \cup \{x\} \in E(G')$ via cases on whether or not $e$ is in $E(G)$. 

\begin{enumerate}[leftmargin=*, label=\emph{Case \Roman*.}]
	\item $e \in E(G)$

		As $w \ll_G x$, $f \in E(G)$. Now, since $e$ wasn't removed it either doesn't contain $u$, contains both $u$ and $v$, or $(e \setminus \{u\}) \cup \{v\} \in E(G)$. If $u \not \in e$, then $u \not \in f$ and $f \in E(G')$, since we only remove edges containing $u$ in the hypergraph move. Similarly, if $e$ contains $u$ and $v$, then so does $f$ and we again have $f \in E(G')$ as we don't remove edges with both $u$ and $v$ in the hypergraph move. So $(e \setminus \{u\}) \cup \{v\} \in E(G)$. As $w \ll_G x$, we see $(e \setminus \{u, w\}) \cup \{v, x\} = (f \setminus \{u\}) \cup \{v\} \in E(G)$. But this again implies and $f \in E(G')$.
		
	\item $e \not \in E(G)$

	In this case, $e$ must be added during the hypergraph move and thus $v \in e$. Moreover since $e$ was added, $(e \setminus \{v\}) \cup \{u\} \in E(G)$. Since $w \ll_G x$, $g = (e \setminus \{v, w\}) \cup \{u, x\} \in E(G)$. Now note that after the hypergraph move we must have that $(g \setminus \{u\}) \cup \{v\} = f \in E(G')$.
	
\end{enumerate}
	
\end{proof}

\end{proof}

\begin{lemma}
\label{lem:lowDegreeRemoval}
	Let $G = (V,E)$ be a $k$-uniform hypergraph, $S \subseteq V$, $v \in S$, and $I = (A \cup B, E(I))$ be the incidence graph induced by $S$ with respect to $v$. Let $x \in V(G)$ and $y \in V(G) \setminus S$ be such that $x \ll_G y$ and $s \ll_G y$ for all $s \in S$. Suppose that for every subset $b \in B$ with $\deg_I(b) \leq n^{2/3}$, we remove the edges $b \cup \{s\}$ for $s \in S$ from $E(G)$, resulting in a hypergraph $G'$. Then $x \ll_{G'} y$.
\end{lemma}

\begin{proof}
	Let $e \in E(G')$ be an edge containing $x$ and not $y$. We'll show that $f = e \setminus \{x\} \cup \{y\} \in E(G')$. Since $E(G') \subseteq E(G)$, $e \in E(G)$. Combining this with the fact that $x \ll_G y$ implies $f \in E(G)$. Now fix some $s \in f \cap S \subseteq e \cap S$. We'll show that $f \setminus \{s\}$ does not lead to the removal of $f$. To do so, we take cases on why $e \setminus \{s\}$ didn't lead to the removal of $e$.
	
	\begin{enumerate}[leftmargin=*, label=\emph{Case \Roman*.}]
		\item $\deg_I(e \setminus \{s\}) > n^{2/3}$

			Note that $s \not = x$ as $x \not \in f$. Combining this with the fact that $x \ll_G y$, we observe that for any $t \not = y$, if $e \setminus \{s\} \cup \{t\} \in E(G)$ then $e \setminus \{s, x\} \cup \{t,y\} = (f \setminus \{s\}) \cup \{t\} \in E(G)$. So, $\deg_I(f \setminus \{s\}) \geq \deg_I(e \setminus \{s\}) > n^{2/3}$, and $f \setminus \{s\}$ doesn't lead to the removal of $f$.
			
		\item $v \in e$
	
		If $x \not = v$, then $v \in f$. Observe that by construction, the incidence graph has no edges corresponding to hyperedges in $G$ containing $v$. So we never remove edges containing $v$, and $f \in E(G')$. 
		
		Now assume that $x = v$. Since $s \ll_G y$, we have that $e \setminus \{s\} \cup \{y\} \in E(G)$. But now note $f \setminus \{s\} \cup \{v\} = e \setminus \{s\} \cup \{y\}$. It then follows that $f \setminus \{s\} \not \in B$. Thus, we conclude $f \setminus \{s\}$ doesn't lead to the removal of $f$.
		
		\item $e \setminus \{s\} \cup \{v\} \in E(G)$

			Since $x \ll_G y$, $e \setminus \{s,x\} \cup \{v,y\} \in E(G)$. But $e \setminus \{s,x\} \cup \{v,y\} = f \setminus \{s\} \cup \{v\}$. Thus $f \setminus \{s\} \not \in B$ and again doesn't cause the removal of $f$.
	\end{enumerate}
	
	Since we chose $s$ arbitrarily, $f \in E(G')$, as desired.

\end{proof}

With this, we can now prove Lemma \ref{lem:hypergraphTransformation}.

\begin{proof}[Proof of Lemma \ref{lem:hypergraphTransformation}]

	We apply the following algorithm to transform $G$ into a threshold hypergraph: 
	
	Initially, let $T = \emptyset$. Repeat $n$ times: Choose a vertex $v \in V \setminus T$. Apply the domination procedure with $G_0 = G$ from $V \setminus T$ to $v$, resulting in a graphs $G_1, G_2, ..., G_k$, intermediate graphs $H_0, ..., H_{k-1}$, and dominating sets $D_0, ..., D_{k-1}$. Set $T = T \cup \{v\}$ and $G = G_k$.
	
	\begin{claim}
		When the algorithm terminates, $G$ is threshold hypergraph.
	\end{claim}
	
	\begin{proof}
		Suppose that at time $r$ we have that $T = \{t_1, ..., t_r\}$, where $t_i$ is the $i$th vertex added to $T$. We'll prove by induction by induction on $r$ that $t_1 \gg_{G} t_2 \gg_G ... \gg_G t_r \gg s$ for all $s \in V(G) \setminus T$. The base case of $r = 0$ is vacuosly true. So now suppose the statement is true for some $r$. By Lemma \ref{lem:lowDegreeRemoval}, we have that $t_1 \gg_{H_0} t_2 \gg_{H_0} ... \gg_{H_0} t_r \gg_{H_0} s$ for all $s \in V(G) \setminus T$. By Lemma \ref{lem:subgraphMoves}, after applying the hypergraph moves we have $t_1 \gg_{G_1} t_2 \gg_{G_1} ... \gg_{G_1} t_r \gg_{G_1} s$ for all $s \in V(G) \setminus T$. Repeating this argument $k$ times gives us that $t_1 \gg_{G_k} t_2 \gg_{G_k} ... \gg_{G_k} t_r \gg_{G_k} s$ for all $s \in V(G) \setminus T$. Finally, by Lemma \ref{lem:dominating-set-moves} we have that $v \gg_{G_k} s$ for all $s \in V(G) \setminus (T \cup v)$. As $t_{r+1} = v$, we see that the inductive hypothesis holds.
	\end{proof}

	\begin{claim}
		Each iteration applies at most $O_k(\sqrt{n})$ local moves.
	\end{claim}
	
	\begin{proof}
		Note that by Corollary \ref{cor:neighborhoodDomSet}, every dominating set $D_i$ has size at most $O_k(\sqrt{n})$. It then follows that each iteration uses at most $O_k(k\sqrt{n}) = O_k(\sqrt{n})$ local moves.
	\end{proof}

	Note that the above claim implies that we make $O_k(n^{3/2})$ hypergraph moves in total. 

	\begin{claim}
		The algorithm removes $o_k(n^k)$ hyperedges in total.
	\end{claim}

	\begin{proof}
		Fix some $b \in \binom{[n]}{k-1}$. We claim that $b$ only causes the removal of edges at most $k$ times. Indeed, after removing all edges of the form $b \cup \{s\}$, we must add some hyperedge containing $b$ for $b$ to cause the removal of more hyperedges. Let $e$ be the first hyperedge containing $b$ added after $b$'s $\ell$th removal. $e$ must have been added by some local move, say from $s$ to $v$. Now since $e$ is the first such edge we have that $b \not \subset e \setminus \{v\} \cup \{s\}$. So $v \in b$. Since we make local moves to any vertex at most once and there are $k-1$ elements in $b$, it follows that $b$'s low degree only causes the remove of hyperedges at most $k$ times.(Note that we are using the fact that if $b$ is added due to a local move to $v$, it cannot be removed until the next iteration as we never remove hyperedges containing $v$ in that iteration.)
		
		We now conclude that each $b \in \binom{[n]}{k-1}$ causes the removal of at most $kn^{2/3}$ edges. So in total we remove at most 
			\[\binom{n}{k-1} k n^{2/3} = o_k(n^k)\]
		hyperedges in total.
	\end{proof}

\end{proof}

We end this section by remarking that the ideas in this proof simplify considerably in the case of graphs giving an alternative proof of Theorem \ref{thresholdMaximizationTheorem}. That said this argument loses slightly more homomorphisms than Theorem \ref{quantitativeThresholdMaximizationTheorem} and yields a weaker version of Theorem \ref{sparseThresholdMaximizationTheorem}.

\section{Applications of Threshold Maximization}
\subsection{Maximization on the Clique}

Before we begin with the applications, we start by reproving a result of \cite{gerbner2018maximum} that shows for any graph $H$ the quasi-clique maximizes the number of homomorphisms from $H$ for $c$ sufficiently large. This will contrast with the results of Section \ref{subsection:threeparts}, where we show that for small edge density $c$ there are graphs $H$ whose optimizers are threshold graphs that must have strictly more than two parts.

\begin{theorem}
\label{cliqueMaximizationTheorem}
Let $H$ be a fixed graph, then we have that for $c > k_H$, where $k_H \in [0,1)$, $\mathcal{M}_H(c)$ is achieved on the quasi-clique.
\end{theorem}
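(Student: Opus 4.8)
The plan is to carry out the reduction announced in the introduction: bound $\mathcal{M}_H(c)$ by the analogous quantity for a star of the same order and then quote the known solution of the star problem. First I would dispose of trivialities. Isolated vertices of $H$ contribute a free factor $|G|$ to $\hom(H,G)$ and hence cancel in $t(H,\cdot)$; replacing $H$ by the subgraph $H'$ spanned by its non‑isolated vertices changes neither $t(H,\cdot)$ nor which sequences of graphs attain $\mathcal{M}_H(c)$, so we may assume $H$ has no isolated vertices, and if $H$ has no edges the statement is vacuous. Write $h=|H|$.

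The heart of the argument is the pointwise bound $t(H,G)\le t(K_{1,h-1},G)$, valid for every \emph{connected} $H$ on $h$ vertices and every graph $G$. I would prove it in two steps. Since deleting edges only increases homomorphism counts, $\hom(H,G)\le\hom(R,G)$ for any spanning tree $R$ of $H$. And among all trees on $h$ vertices the star $K_{1,h-1}$ maximizes $\hom(\cdot,G)$ for every $G$, so $\hom(R,G)\le\hom(K_{1,h-1},G)$; dividing by $|G|^h$ gives the claim. The tree extremal fact is classical and provable by a shifting argument: if a tree $R$ is not a star, pick a maximum‑degree vertex $v$ and a non‑leaf neighbour $u$ of $v$, detach from $u$ the subtree $S$ hanging off a neighbour of $u$ other than $v$, and reattach $S$ at $v$ to obtain $R'$; writing $\hom(R,G)=\sum_{\varphi}g(\varphi(u))$ and $\hom(R',G)=\sum_{\varphi}g(\varphi(v))$, where $\varphi$ ranges over homomorphisms into $G$ of the common remainder tree $R\setminus S$ and $g(y)=\sum_{z\sim y}f_S(z)\ge 0$ counts the completions of $S$ at $y$, a rearrangement estimate using $\deg_R(v)>\deg_R(u)$ shows the count does not drop, and iterating reaches $K_{1,h-1}$.

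Next one reassembles for disconnected $H$ (still without isolated vertices): $t(H,G)=\prod_C t(C,G)$ over the connected components $C$ of $H$, and since (using Lemma~\ref{homSeqLemma}) every $t(C,G)\le\mathcal{M}_C(c)\le\mathcal{M}_{K_{1,|C|-1}}(c)$, taking a limsup gives $\mathcal{M}_H(c)\le\prod_C\mathcal{M}_{K_{1,|C|-1}}(c)$. It remains to evaluate this for $c$ near $1$. For any $k\ge 1$ the quantity $\mathcal{M}_{K_{1,k}}(c)$ is attained on the quasi‑star or the quasi‑clique (trivially for $k=1$; for $k=2$ this is Ahlswede--Katona, reproved here as Theorem~\ref{cherryMaximizationTheorem}; for $k\ge 3$ it is Reiher--Wagner \cite{reiher2018maximum}); a direct computation gives $t(K_{1,k},\cdot)\to(1-\eta)+\eta(1-\eta)^k=1-k(1-c)+O\big((1-c)^{3/2}\big)$ on the quasi‑star of density $c$ (here $\eta=\sqrt{1-c}$) and $t(K_{1,k},\cdot)\to c^{(k+1)/2}=1-\tfrac{k+1}{2}(1-c)+O\big((1-c)^2\big)$ on the quasi‑clique of density $c$, so for $k\ge 2$ the quasi‑clique strictly wins once $c$ exceeds some $k_k^{\ast}<1$, whence $\mathcal{M}_{K_{1,k}}(c)=c^{(k+1)/2}$ there. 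Taking $k_H$ to be the maximum of the finitely many $k_{|C|-1}^{\ast}$, for $c>k_H$ we obtain $\mathcal{M}_H(c)\le\prod_C c^{|C|/2}=c^{h/2}$. (This also yields the advertised implication, since ``$\mathcal{M}_{K_{1,h-1}}(c)$ is maximized on the quasi‑clique'' forces $c\ge k_{h-1}^{\ast}\ge k_{|C|-1}^{\ast}$ for every component $C$.)

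Finally, the matching lower bound: a homomorphism from $H$ into the quasi‑clique $Q_c=K_{a}\sqcup\overline{K}_{n-a}$ with $a\sim\sqrt{c}\,n$ must send every vertex into the clique, so $\hom(H,Q_c)$ is the chromatic polynomial $P(H,a)\sim a^h$ and $t(H,Q_c)\to c^{h/2}$; hence $\mathcal{M}_H(c)=c^{h/2}$ is attained on the quasi‑clique for $c>k_H$. The step I expect to be the main obstacle is making the tree inequality $\hom(R,G)\le\hom(K_{1,h-1},G)$ fully rigorous --- in particular checking that the shifting operation never decreases the homomorphism count --- together with the routine but fussy bookkeeping needed to produce a single threshold $k_H$ working for all $c$ in a neighbourhood of $1$.
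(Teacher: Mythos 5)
Your proposal is correct and follows essentially the same route as the paper: reduce to connected components, pass to a spanning tree, dominate by the star $K_{1,h-1}$, invoke the Reiher--Wagner result for stars at large density, and observe that $t(H,\cdot)$ and $t(K_{1,h-1},\cdot)$ agree asymptotically on the quasi-clique. The one step you prove by hand --- that the star maximizes homomorphism counts among trees --- is precisely the step the paper outsources to Sidorenko \cite{sidorenko1994partially}, and your shifting sketch is the fragile point (the needed inequality $\sum_\varphi g(\varphi(v))\ge\sum_\varphi g(\varphi(u))$ is not a bare rearrangement, since $g$ need not correlate with degree); citing Sidorenko closes it, and the rest of your bookkeeping (product over components, asymptotics near $c=1$) is sound.
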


\begin{proof}
We will prove the result for a connected graph $H$. If the graph is not connected, we can apply the result to each of the components and take the maximum of the $k_H$'s over all the components. 

Let $T$ be a spanning tree of $H$. We note that clearly for any graph $G$, $t(H,G) \leq t(T,G)$ since $T$ was obtained by removing edges from $H$. Now using a result from Sidorenko \cite{sidorenko1994partially} we have that $t(T,G) \leq t(K_{1, |H|-1}, G)$ for all $G$. By a result of Reiher and Wagner \cite{reiher2018maximum}, we have that there exists a $k_H$ such that for $c > k_H$, $\mathcal{M}_{K_{1, |H|-1}}(c)$ is achieved on the quasi-clique. But now note that for any quasi-clique $\mathcal{K}$ have that $t(H, \mathcal{K}) = t(K_{1, |H|-1}, \mathcal{K})$, so $\mathcal{M}_H(c)$ is also attained on the quasi-clique.
\end{proof}

\subsection{Graphs Requiring Three Parts}
\label{subsection:threeparts}

In this section, we show that a result of Janson et al. \cite{janson2004upper} (cf. Theorem \ref{subgraphCountTheorem}) implies that some graphs require at least $3$ parts to optimize, which disproves a conjecture of Nagy that all graphs require only two parts \cite{nagy2017number}.

\begin{theorem}
\label{threePartsTheorem}
Let $H$ be a graph with $\alpha^*(H) > \alpha(H)$ and $\alpha^*(H) > \frac{|H|}{2}$, then for $c$ sufficiently small, $H$ is not optimized on the quasi-star or quasi-clique.
\end{theorem}

To begin, we show that for all graphs $H$ there exists a threshold graph $T$ on three parts such that $t(H,T)$ matches the upper bound from Theorem \ref{subgraphCountTheorem} up to constant factors depending on $H$. We will then prove Theorem \ref{threePartsHoms} by showing threshold graphs with at most two parts have significantly fewer homomorphisms.

\begin{lemma}
\label{threePartsHoms}
Let $H$ be a fixed graph and $n$ and $m$ be positive integers such that $2n \leq m \leq \binom{n}{2}$, then there exists a threshold graph $T$ with at most $3$ parts on $n$ vertices with at most $m$ edges such that $\hom(H,T) = \Omega_H(m^{|H| - \alpha^*(H)} n^{2\alpha^*(H) - |H|})$.
\end{lemma}

\begin{proof}
We will let $G$ be a threshold graph with $3$ parts:
    \[\underbrace{11 \hdots 1}_{\alpha \text{ 1's}} \underbrace{00 \hdots 0}_{\beta \text{ 0's}} \underbrace{11 \hdots 1}_{\gamma \text{ 1's}}, \]
\noindent where we let $\alpha = \lfloor \sqrt{m} \rfloor$, $\gamma = \lfloor m/(2n) \rfloor$, $\beta = n - \alpha - \gamma$. Then we have that there are at most
\[\frac{(\sqrt{m})^2}{2} + \frac{m}{2n}n = m\]
\noindent edges as desired.

To analyze the number of homomorphisms, we note that we lost at most a factor of two from the floors. That is $\alpha \geq \sqrt{m}/2$ and $\gamma \geq (m/4n)$. Clearly, we also have that $\beta \geq n - \sqrt{m} - m/(2n) \geq n/25$. 

Now let $f: V(H) \rightarrow \{0,1/2,1\}$ be an optimal fractional independence function i.e. $f(v_1), ..., f(v_n)$ is an optimal solution to the linear program used to define to the fractional independence number (see Definition \ref{def:fractional-independence-number}). Note that such a function exists since the feasibility polytope for fractional independence is half-integral. Now, we claim that any injective function $\varphi$ that sends $f^{-1}(1/2)$ to the first block of $1$'s, $f^{-1}(0)$ to the second block of $1$'s, and $f^{-1}(1)$ to the block of $0's$ is a homomorphism. This claim will complete the proof since there are at least 

\[\left( \frac{\sqrt{m}}{2} \right)^{|f^{-1}(1/2)|} \left(\frac{n}{25} \right)^{|f^{-1}(1)|} \left(\frac{m}{4n} \right)^{|f^{-1}(0)|} = \Omega_H(m^{|H| - \alpha^*(H)} n^{2\alpha^*(H) - |H|})\]
\noindent such functions. To see that these are homomorphisms, suppose $uv \in E(H)$. Without loss of generality assume $f(u) \leq f(v)$. Now if $u \in f^{-1}(0)$, then since every vertex in the final block of $1$'s is domininating we have that $\varphi(u) \varphi(v) \in E(G)$. Otherwise if $f(u) = f(v) = \frac{1}{2}$, then we have that both $\varphi(u)$ and $\varphi(v)$ are mapped to vertices in the first block of ones. Since any two vertices in this block are connected, we have $\varphi(u) \varphi(v) \in E(G)$ and this is indeed a homomorphism.
\end{proof}

\begin{corollary}
\label{cor:janson-lower-bound}
Let $H$ be a fixed graph and $c \in [0,1]$, then $\mathcal{M}_H(c) = \Omega_H(c^{|H| - \alpha^*(H)})$.
\end{corollary}

We will now show that there are graphs $H$ such that with only two parts we get far fewer homomorphisms as $c \rightarrow 0$.

\begin{proof}[Proof of Theorem \ref{threePartsHoms}]
Since isolated vertices do not affect the optimizing graph, we'll assume without loss of generality that $H$ has no isolated vertices.

By Corollary \ref{cor:janson-lower-bound}, we have that there exists a graph $G$ and constant $C_1$ such that
    \[t(H,G) \geq C_1 c^{|H|-\alpha^*(H)}.\]
This implies that $\mathcal{M}_H(c) \geq C_1 c^{|H|-\alpha^*(H)}$ by Lemma \ref{homSeqLemma}. For the sake of contradiction, assume that $H$ is optimized on the quasi-star or quasi-clique. Note that if it's optimized on the clique then we have that for all graphs $G$
    \[t(H,G) \leq c^{|H|/2}.\]

\noindent Since $\alpha^*(H) > |H|/2$, we have that for $c$ sufficiently small $C_1 c^{|H|-\alpha^*(H)} > c^{|H|/2}$, a contradiction.

So we must have that the homomorphism density of $H$ is maximized on the quasi-star. Note that in any homomorphism from $H$ to the quasi-star, the vertices in $H$ mapping to $0$ vertices in the quasi-star must form an independent set. Hence we have at least $|H| - \alpha(H)$ vertices are mapped to one of the at most $cn$ dominating vertices in the quasi-star of edge density $c$. By a union bound argument, this implies that there are at most $O_H(c^{n-\alpha(H)} n^{|H|})$ homomorphisms. But now note again that for $c$ sufficiently small we have that $C_2 c^{|H|-\alpha(H)} < C_1 c^{|H|-\alpha^*(H)}$, which is a contradiction.

Thus such a graph $H$ is optimized on neither the quasi-star nor the quasi-clique.
\end{proof}

\begin{corollary}
There exist graphs that are optimized on neither the quasi-star nor the quasi-clique. Moreover these graphs can be taken to be connected and threshold.
\end{corollary}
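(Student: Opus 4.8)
The plan is simply to exhibit an explicit graph satisfying the hypotheses of Theorem~\ref{threePartsTheorem} --- namely $\alpha^*(H) > \alpha(H)$ and $\alpha^*(H) > |H|/2$ --- that is moreover connected and threshold, after which the corollary is immediate. I would take the family $H_t$ (for an integer $t \ge 2$) defined as follows: start from a triangle on vertices $\{a,b,c\}$, add $t$ vertices $\ell_1,\dots,\ell_t$ with no edges among themselves or to $\{a,b,c\}$, and finally add one vertex $z$ adjacent to all of $a,b,c,\ell_1,\dots,\ell_t$. Then $H_t$ is connected (because of $z$) and threshold: building it from a single vertex by adding two dominating vertices (which creates the triangle), then $t$ isolated vertices, then one dominating vertex shows it corresponds to the binary string $11\underbrace{0\cdots 0}_{t}1$; alternatively one can check directly that it contains no induced $P_4$, $C_4$, or $\overline{C_4}$, which is one of the equivalent characterizations of threshold graphs recalled in Section~\ref{section:prelim}.

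Next I would compute the two independence parameters. For the integral one, an independent set can contain at most one vertex of the triangle and cannot contain $z$ together with any other vertex, so the largest one is $\{a,\ell_1,\dots,\ell_t\}$ and $\alpha(H_t) = t+1$. For the fractional one I would use LP duality in the form $\alpha^*(G) = |G| - \nu^*(G)$, where $\nu^*$ is the fractional matching number (the dual of the fractional vertex cover LP). Every edge of $H_t$ is incident to $z$ or lies inside the triangle, and the triangle alone supports fractional matching weight at most $\tfrac32$, so $\nu^*(H_t) \le 1 + \tfrac32 = \tfrac52$; this is attained by assigning weight $\tfrac12$ to each of $ab,ac,bc$ and weight $1$ to $z\ell_1$. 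Hence $\alpha^*(H_t) = (t+4) - \tfrac52 = t + \tfrac32$.

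Finally I would verify the two inequalities: $\alpha^*(H_t) = t+\tfrac32 > t+1 = \alpha(H_t)$ holds for every $t$, while $\alpha^*(H_t) = t+\tfrac32 > \tfrac{t+4}{2} = |H_t|/2$ holds precisely when $t > 1$. Thus for every $t \ge 2$, Theorem~\ref{threePartsTheorem} shows that for $c$ small enough $H_t$ is optimized on neither the quasi-star nor the quasi-clique, and since each $H_t$ is connected and threshold the corollary follows (taking $t=2$ already suffices, but the family makes clear there are infinitely many such graphs). There is no genuine obstruction here; the only point requiring care is the choice of example, since the most obvious candidates fail the strict inequality $\alpha^* > |H|/2$ --- every cycle, odd or even, has $\alpha^* = |H|/2$ exactly, and the small connected threshold graphs containing a triangle (such as $K_3$, $K_4$, or the paw) have $\alpha^* = |H|/2$ or $\alpha^* = \alpha$ --- which is exactly the reason the construction needs at least two of the extra vertices $\ell_i$.
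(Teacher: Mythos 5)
Your proposal is correct and follows essentially the same route as the paper: exhibit a connected threshold graph with $\alpha^*(H) > \alpha(H)$ and $\alpha^*(H) > |H|/2$ and invoke Theorem~\ref{threePartsTheorem}. The only difference is the witness --- your $6$-vertex graph $H_2$ (with the correctly computed values $\alpha = 3$, $\alpha^* = 7/2$) versus the paper's $K_6 \sqcup K_{1,3}$ made connected by joining the clique to the star's center --- and your verification of both inequalities, including the duality computation $\alpha^* = |H| - \nu^*$, is sound.
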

\begin{proof}
Take $H = K_3 \sqcup K_{1,2}$. Then $\alpha(H) = 3$ and $\alpha^*(H) = 3.5$. The result then follows by Theorem \ref{threePartsTheorem}.

If we wish to take the graph to be connected, adding edges from each vertex in the clique to the vertex of degree $2$ in the star gives a graph $H$ where we still have $\alpha(H) = 3$ and $\alpha^*(H) = 3.5$. Moreover, such a graph is a threshold graph. We can also take $H = K_\ell \sqcup K_{1,2}$ for any $\ell \geq 3$ if we want arbitrarily large examples.
\end{proof}

\subsection{Maximizing The Number of 2-Stars}
We now rederive results of Ahlswede and Katona \citep{ahlswede1978graphs} for maximizing the number of two-stars in a graph. 

\cherryMaximizationTheorem*

\begin{proof}
Let $c > 0$ and consider the limiting threshold graph $T$ with edge density at most $c$ that achieves $\mathcal{M}_H(c)$. Note that we will not treat limiting threshold graphs here, and refer the reader to \cite{diaconis2008threshold} for more details. Now let $T'$ denote the threshold graph with edge density at most $c$ that maximizes $t(H,T')$ among all limiting threshold graphs with at most some finite number of parts $f$. Moreover, let $T'$ have the minimum number of parts among all such graphs. We'll show that $T'$ only has $2$ parts. The claim will then follow from a result of \cite{diaconis2008threshold} which states that for any $\epsilon > 0$ there is a limiting threshold graph $T^*$ with finitely many parts and edge density at most $c$ such that $t(H,T^*) \geq t(H,T) - \epsilon$.

For the sake of contradiction, suppose $T'$ has $\ell > 2$ parts. 
Recall that we can write $T'$ as a sequence of blocks of $0$'s and $1$s as described in Section \ref{section:prelim}. Suppose that the sequence starts with a $0$ and the blocks have proportions $\alpha_1, \alpha_2, ..., \alpha_{\ell} \in [0,1]$. Denote by $k$ the quantity $\alpha_4 + \alpha_6 + ...$ i.e. the proportion of $1$s after $\alpha_3$. Moreover, in a minor abuse of notation we let $c$ denote the edge density in just blocks $\alpha_1, \alpha_2, \alpha_3$ and $d$ denote the constant $\alpha_1 + \alpha_2 + \alpha_3$.

Now note that we must have that $\alpha_1, \alpha_2, \alpha_3$ must optimize

\begin{equation}
\begin{aligned}
\max_{\alpha,\beta,\gamma} \quad &  \alpha (k + \beta) ^2 + \beta (\alpha + \beta + k)^2 + \gamma k^2 \\
\textrm{s.t.} \quad & 2\alpha \beta + \beta^2 = c \\
& \alpha + \beta + \gamma = d\\
  &\alpha, \beta, \gamma \geq 0 .   \\
\end{aligned}
\end{equation}

We can now solve for other variables in terms of $\beta$.
\[\alpha = \frac{c}{2\beta} - \frac{\beta}{2}\]
\[\gamma = d - \alpha - \beta = d - \beta - \left(\frac{c}{2\beta} - \frac{\beta}{2} \right) = d - \frac{\beta}{2} - \frac{c}{2\beta}. \]

Note we can then rewrite the objective as a function of $\beta$.

\[f(\beta) = \left( \frac{c}{2\beta} - \frac{\beta}{2} \right) (k + \beta) ^2 + \beta \left(\frac{c}{2\beta} - \frac{\beta}{2} + \beta + k\right)^2 + \left( d - \frac{\beta}{2} - \frac{c}{2\beta} \right) k^2\]

\[= -\frac{\beta^3}{4} + \beta c + \frac{c^2}{4 \beta} + k(2c + dk). \]

\noindent Now we can take derivatives and find
    \[f'(\beta) = \frac{4\beta^2c - 3\beta^4 - c^2}{4\beta^2},\]
and
    \[f''(\beta) = \frac{-3\beta^4+c^2}{2\beta^3}.\]

Now we note that $f''(\beta) \leq 0$ if and only if $\beta \geq \sqrt{c}/\sqrt[4]{3}$. Clearly a local maximum must have $f'(\beta) = 0$ and $f''(\beta) \leq 0$. However, we note that the positivity constraints on $\alpha$ imply that $\beta \leq \sqrt{c}$. We then have that if $f''(\beta) \leq 0$ and $\beta < \sqrt{c}$ then $f'(\beta) > f'(\sqrt{c}) = 0$. Hence the optima must occur at the end points, where one of $\alpha$ or $\gamma$ is $0$. This contradicts the fact that $T'$ has $\ell$ parts.

Now suppose that it starts with a $1$ and the parts have proportions $\alpha_1, \alpha_2, ..., \alpha_{\ell} \in [0,1]$. Denote by $k$ the quantity $\alpha_5 + \alpha_7 + ...$ i.e. the proportion of $1$s after $\alpha_3$. Again, in a minor abuse of notation we let $c$ denote the edge density in just blocks $\alpha_1, \alpha_2, \alpha_3$ and $d$ denote the constant $\alpha_1 + \alpha_2 + \alpha_3$. Then $\alpha_1, \alpha_2, \alpha_3$ optimize

\begin{equation}
\begin{aligned}
\max_{\alpha,\beta,\gamma} \quad &  \alpha (k + \alpha + \gamma )^2 + \beta (\gamma + k)^2 + \gamma (k + d)^2 \\
\textrm{s.t.} \quad & \alpha^2 + 2\alpha \gamma + \gamma^2 + 2\beta \gamma = c \\
& \alpha + \beta + \gamma = d\\
  &\alpha, \beta, \gamma \geq 0 .   \\
\end{aligned}
\end{equation}

We can rewrite the edge density constraint as 
    \[(\beta - d)^2 + 2\beta \gamma = c.\]
As before, we will turn the problem into a univariate one by solving in terms of $\beta$.
    \[\gamma = \frac{c - (\beta-d)^2}{2\beta},\]
\noindent and 
    \[\alpha = d - \beta - \frac{c - (\beta-d)^2}{2\beta}.\]
Substituting into the objective function gives
    \[f(\beta) = \left(d - \beta - \frac{c - (\beta-d)^2}{2\beta} \right) \left(k + d - \beta \right)^2 + \beta \left(\frac{c - (\beta-d)^2}{2\beta} + k \right)^2 + \left(\frac{c - (\beta-d)^2}{2\beta}\right) (k + d)^2. \]
    
Taking derivatives then gives us that
    \[f'(\beta) = \frac{-(\beta^2 + c - d^2)(3\beta^2+c-d^2)}{4\beta^2},\]
\noindent and
    \[f''(\beta) = \frac{-3\beta^4 + (c-d^2)^2}{2\beta^3}.\]
Now we note that if $f''(\beta) \leq 0$ then $\beta \geq \frac{\sqrt{d^2 - c}}{\sqrt[4]{3}} $. So if $f''(\beta) \leq 0$, then $3\beta^2+c-d^2 \geq 0$. We note that if there is equality, then $c = d^2$ and $\beta = 0$, contradicting the minimality of $T'$. So if $f'(\beta) = 0$ and $f''(\beta) \leq 0$, we must have that $\beta = \sqrt{d^2 - c}$. But then we have that $\alpha = 0$, which is again a contradiction.

Since we have reached a contradiction in all possible cases, we have that $\ell \leq 2$ and that the optimizing graph can be taken to be either the quasi-star or the quasi-clique.
\end{proof}

\section{Conclusion}
We end with a few open questions. First, perhaps the most natural question to ask

\begin{question}
\label{fewPartsQ}
For any $c \in [0,1]$ and graph $H$ is it true that $t(H, \cdot)$ asymptotically maximized on a threshold graph with finitely many parts? Is there a bound on the number of parts that is independent of $H$?
\end{question}

An intermediate question easier to resolve than the above, but still of interest would be

\begin{question}
Can the approach of Theorem \ref{cherryMaximizationTheorem} be generalized to work for $k$-stars and more general graphs? Can the one-dimensional graphons for threshold graphs help us solve the problem for various $H$?
\end{question}

We also remark that our result in Theorem \ref{sparseThresholdMaximizationTheorem} is likely not tight and can probably be extended to sparser graphs, leading us to the next question

\begin{question}
Does Theorem \ref{sparseThresholdMaximizationTheorem} hold whenever $m = \omega(n)$?
\end{question}

\noindent \textbf{Acknowledgements:} We thank Annie Raymond, Mohit Singh and Rekha Thomas for useful conversations. Grigoriy Blekherman was partially supported by NSF grant DMS-1901950. We thank the anonymous referee for their comments which helped to greatly improve the paper.

\nocite{*}
\bibliographystyle{acm}
\bibliography{bibliography}

\end{document}